\documentclass[12pt]{article}
\usepackage{amsmath, amsthm}
\usepackage{enumerate}
\usepackage{amssymb}
\usepackage{geometry}
\geometry{
	a4paper,
	total={170mm,257mm},
	left=20mm,
	top=20mm,
}
\newtheorem{thm}{Theorem}[section]
\newtheorem{exam}{Example}[section]
\newtheorem{cor}[thm]{Corollary}
\newtheorem{lem}[thm]{Lemma}
\newtheorem{ques}[thm]{Question}
\newtheorem{prop}[thm]{Proposition}

\theoremstyle{definition}
\newtheorem{defn}{Definition}[section]
\theoremstyle{remark}
\newtheorem{rem}{Remark}[section]

\DeclareMathOperator{\ptc}{\xrightarrow[]{p_\tau}}
\DeclareMathOperator{\uptc}{\xrightarrow[]{up_\tau}}
\DeclareMathOperator{\tc}{\xrightarrow[]{\tau}}
\DeclareMathOperator{\tcc}{\xrightarrow[]{\acute{\tau}}}

\DeclareMathOperator{\wc}{\xrightarrow[]{w}}

\DeclareMathOperator{\oc}{\xrightarrow[]{o}}

\newcommand{\eval}[2][\right]{\relax
  \ifx#1\right\relax \left.\fi#2#1\rvert}

\begin{document}

\title{\bf Compact-Like Operators in Vector Lattices Normed by Locally Solid Lattices} 
\maketitle

\author{\centering Abdullah AYDIN \\ \bigskip  \small  
	Department of Mathematics, Mu\c{s} Alparslan University, Mu\c{s}, Turkey. \\}

\bigskip

\abstract{A linear operator $T$ between two vector lattices normed by locally solid Riesz spaces is said to be $p_\tau$-continuous if, for any $p_\tau$-null net $(x_\alpha)$, the net $(Tx_\alpha)$ is $p_\tau$-null, and $T$ is said to be $p_\tau$-bounded operator if it sends $p_\tau$-bounded subsets to $p_\tau$-bounded subsets. Also, $T$ is called $p_\tau$-compact if, for any $p_\tau$-bounded net $(x_\alpha)$, the net $(Tx_\alpha)$ has a $p_\tau$-convergent subnet. They generalize several known classes of operators such as norm continuous, order continuous, $p$-continuous, order bounded, $p$-bounded, compact and AM-compact operators. We study the general properties of these operators.}

\bigskip
\let\thefootnote\relax\footnotetext
{Keywords: $p_\tau$-compact operator, $p_\tau$-continuous operator, $p_\tau$-bounded operator, locally solid lattice
	
\text{2010 AMS Mathematics Subject Classification:} 46A40, 	54A20, 46A03

e-mail: aaydin.aabdullah@gmail.com}

\section{Introduction}
Compact operators provide natural and effective tools in functional analysis. In the present paper, the aim is to introduce and study compact-like operators in vector lattices normed by locally solid vector lattices. Recently, many papers are devoted to the concept of unbounded convergence; see for example \cite{A,AA,AGG,AEEM,AEEM2,DOT,GTX,KMT}. It is well-investigated in vector lattices and locally solid vector lattices \cite{AB,ABPO,K,V}. We refer to the reader for detail information about the operator theory, the theory of locally solid vector lattice, and lattice-normed vector lattice; see \cite{AB,ABPO,AA,AEEM,AEEM2,L,M}.

Recall that a net $(x_\alpha)_{\alpha\in A}$ in a vector lattice $X$ is {\em order convergent} to $x\in X$ if there exists another net $(y_\beta)_{\beta\in B}$ satisfying $y_\beta \downarrow 0$, and, for any $\beta\in B$, there exists $\alpha_\beta\in A$ such that $\lvert x_\alpha-x\rvert\leq y_\beta$ for all $\alpha\geq\alpha_\beta$. In this case, we write $x_\alpha\oc x$. In a vector lattice $X$, a net $(x_\alpha)$ is {\em unbounded order convergent} to $x\in X$ if $\lvert x_\alpha-x\rvert\wedge u\oc 0$ for every $u\in X_+$. Let $X$ be a vector space, $E$ be a vector lattice, and $p:X\to E_+$ be a vector norm (i.e. $p(x)=0\Leftrightarrow x=0$, $p(\lambda x)=|\lambda|p(x)$ for all $\lambda\in\mathbb{R}$, $x\in X$, and $p(x+y)\leq p(x)+p(y)$ for all $x,y\in X$) then the triple $(X,p,E)$ is called a {\em lattice-normed space}, abbreviated as $LNS$. A linear operator $T$ between two $LNS$s $(X,p,E)$ and $(Y,m,F)$ is said to be \textit{dominated} if there is a positive operator $S:E\to F$ satisfying $m(Tx)\leq S(p(x))$ for all $x\in X$. In an $LNS$ $(X,p,E)$ a subset $A$ of $X$ is called {\em $p$-bounded} if there exists $e\in E$ such that $p(a)\leq e$ for all $a\in A$; see \cite[Def.2]{AEEM}. The \textit{mixed-norm} on an $LNS$ $(X,p,E)$ is defined by $p\text{-}\lVert x\rVert_E=\lVert p(x)\rVert_E$ for all $x\in X$. We refer the reader for more information on $LNS$s to \cite{BGKKKM,K} and \cite{AEEM}. If $X$ is a vector lattice and the vector norm $p$ is monotone (i.e. $|x|\leq |y|\Rightarrow p(x)\leq p(y)$) then the triple $(X,p,E)$ is called a {\em lattice-normed vector lattice}, abbreviated as $LNVL$; see \cite{AGG,AEEM,AEEM2}.  

A subset $A$ of vector lattice is called {\em solid} whenever $\lvert x\rvert\leq\lvert y\rvert$ and $y\in A$ imply $x\in A$. Let $E$ be a vector lattice and $\tau$ be a linear topology on $E$ that has a base at zero consisting of solid sets. Then the pair $(E,\tau)$ is said a {\em locally solid vector lattice} (or, {\em locally solid lattice}, or {\em locally solid Riesz space}). A locally solid lattice $(E,\tau)$ is said to have the {\em Lebesgue property} if, for any net $(x_\alpha)$ in $E$, $x_\alpha\oc 0$ implies $x_\alpha\tc 0$, and is also said to satisfy the {\em Fatou property} if $\tau$ has a base at zero consisting of solid and order closed sets. It follows from \cite[Thm.2.28]{AB} that a linear topology $\tau$ on a vector lattice $E$ is a locally solid iff it is generated by a family of Riesz pseudonorms $\{\rho_j\}_{j\in J}$. Moreover, if a family of Riesz pseudonorms generates a locally solid topology $\tau$ on a vector lattice $E$ then $x_\alpha \tc x$ iff $\rho_j(x_\alpha-x)\to 0$ in $\mathbb{R}$ for each $j\in J$. In this article, unless otherwise, the pair $(E,\tau)$ refers to as a locally solid lattice, and the topologies in locally solid lattices are generated by families of Riesz pseudonorms $\{\rho_j\}_{j\in J}$. Also, all vector lattices are assumed to be real and Archimedean.

Let $(X,p,E)$ be an $LNVL$ with $(E,\tau)$ being a locally solid lattice. Then $(X,p,E_\tau)$ is said to be a {\em vector lattice normed by locally solid Riesz space} (or, {\em vector lattice normed by locally solid lattice}), abbreviated as $LSNVL$ in \cite{AA}. Throughout this article, we use $X$ instead of $(X,p,E_\tau)$, and $Y$ instead of $(Y,m,F_{\acute{\tau}})$. Note that $L(X,Y)$ denotes the space of all linear operators between vector spaces $X$ and $Y$. If $X$ is a normed space then $X^*$ denotes the topological dual of $X$. We abbreviate the convergence $p(x_{\alpha}-x)\tc 0$ as $x_\alpha\ptc x$, and say in this case that $(x_\alpha)$ $p_\tau$-converges to $x$. A net $(x_\alpha)_{\alpha \in A}$ in an $LSNVL$ $(X,p,E_\tau)$ is said to be {\em $p_\tau$-Cauchy} if the net $(x_\alpha-x_{\alpha'})_{(\alpha,\alpha') \in A\times A}$ $p_\tau$-converges to $0$. An $LSNVL$ $(X,p,E_\tau)$ is called (\textit{sequentially}) {\em $p_\tau$-complete} if every $p_\tau$-Cauchy (sequence) net in $X$ is $p_\tau$-convergent. In an $LSNVL$ $(X,p,E_\tau)$, a subset $A$ of $X$ is called {\em $p_\tau$-bounded} if $p(A)$ is $\tau$-bounded in $E$. An $LSNVL$ $(X,p,E_\tau)$ is called {\em $op_\tau$-continuous} if $x_\alpha\oc 0$ in $X$ implies $p(x_\alpha)\tc 0$ in $E$. A net $(x_\alpha)$ in an $LSNVL$ $(X,p,E_\tau)$ is said to be {\em unbounded $p_\tau$-convergent} to $x\in X$ (shortly, $(x_\alpha)$ $up_\tau$-converges to $x$ or $x_\alpha\uptc x$) if $p(\lvert x_\alpha-x\rvert\wedge u)\tc 0$ for all $u\in X_+$; see \cite{AA}. In this paper, we frequently use the following lemma and so we shall keep in mind it; see \cite[Lem.1.1]{AA}.

\begin{lem}\label{monoton is convergent}
If $(x_\alpha)_{\alpha\in A}$ and $(y_\alpha)_{\alpha\in A}$ be two nets in a locally solid vector lattice $(E,\tau)$ such that $\lvert x_\alpha\rvert\leq \lvert y_\alpha\rvert$ for all $\alpha\in A$ and $y_\alpha\tc 0$ then $x_\alpha\tc 0$. 
\end{lem}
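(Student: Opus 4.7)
The plan is to exploit the characterization of locally solid topologies recalled just before the lemma, namely that $\tau$ is generated by a family of Riesz pseudonorms $\{\rho_j\}_{j\in J}$, together with the net convergence criterion $x_\alpha \tc x \Longleftrightarrow \rho_j(x_\alpha - x)\to 0$ in $\mathbb{R}$ for every $j\in J$.

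First, I would recall that every Riesz pseudonorm $\rho_j$ is monotone on moduli, meaning that $|u|\leq |v|$ implies $\rho_j(u)\leq \rho_j(v)$; this is built into the definition of a Riesz pseudonorm and is the reason such pseudonorms generate \emph{locally solid} topologies. Applying this to the given hypothesis $|x_\alpha|\leq |y_\alpha|$, I obtain the pointwise inequality
\[
0\leq \rho_j(x_\alpha)\leq \rho_j(y_\alpha) \qquad \text{for every }\alpha\in A,\ j\in J.
\]

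Next, the hypothesis $y_\alpha \tc 0$ translates, via the criterion above, into $\rho_j(y_\alpha)\to 0$ in $\mathbb{R}$ for each $j\in J$. Combining this with the inequality just displayed, a standard squeeze argument in $\mathbb{R}$ gives $\rho_j(x_\alpha)\to 0$ for every $j\in J$. Reapplying the pseudonorm criterion in the other direction then yields $x_\alpha \tc 0$, which is the desired conclusion.

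There is no real obstacle here; the only thing to be careful about is that both nets are indexed by the same directed set $A$, so that the inequality and the convergence can be combined termwise without passing to subnets. If one prefers a topology-only argument avoiding pseudonorms, an essentially identical proof works: given a solid $\tau$-neighborhood $U$ of $0$, choose $\alpha_0$ with $y_\alpha\in U$ for $\alpha\geq \alpha_0$, and then $|x_\alpha|\leq |y_\alpha|$ together with solidity of $U$ forces $x_\alpha\in U$ for $\alpha\geq \alpha_0$.
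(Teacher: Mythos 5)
Your argument is correct, and both versions you give (the pseudonorm squeeze and the solid-neighborhood argument) are the standard proof of this fact; the monotonicity $\lvert u\rvert\leq\lvert v\rvert\Rightarrow\rho_j(u)\leq\rho_j(v)$ is indeed part of the definition of a Riesz pseudonorm, so the squeeze goes through. Note that the paper itself does not prove this lemma but only cites it as \cite[Lem.1.1]{AA}, so there is no in-paper proof to compare against; your argument is exactly what one would expect to find there.
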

%%%%%%%%%%%%%%%%%%%%%%%%%%%%%%%%%%%%%%%%%%%%%%%%%%%%%%%%%%%%
\section{$p_\tau$-Continuous and $p_\tau$-Bounded Operators}

In this section, we give the notions of $p_\tau$-continuous and $p_\tau$-bounded operators. 
\begin{defn}
Let $X$, $Y$ be two $LSNVL$s and $T\in L(X,Y)$. Then
\begin{enumerate}
\item[(1)] $T$ is called {\em $p_\tau$-continuous} if $x_\alpha\ptc 0$ in $X$  implies $Tx_\alpha\ptc 0$ in $Y$, and if the condition holds only for sequences then $T$ is called {\em sequentially $p_\tau$-continuous},

\item[(2)] $T$ is called {\em $p_\tau$-bounded} if it maps $p_\tau$-bounded sets in $X$ to $p_\tau$-bounded sets in $Y$.
\end{enumerate}
\end{defn}

\begin{rem}\
\begin{enumerate}
\item [(i)]	Let $T,S:(X,p,E_\tau)\to (Y,m,F_{\acute{\tau}})$ be $p_\tau$-continuous operators. Then $\lambda S+\mu T$ is also $p_\tau$-continuous for any real numbers $\lambda$ and $\mu$. In particular, if $H=T-S$ then $H$ is $p_\tau$-continuous. Moreover, if $-T_1\leq T \leq T_2$ with $T_1\ and \ T_2$ are positive and $p_\tau$-continuous operators then $T$ is $p_\tau$-continuous.
	
\item[(ii)] Let $T$ be an operator between $LSNVL$s $(X,p,E_\tau)$ and $(Y,m,F_{\acute{\tau}})$ with $(E,\tau)$ and $(F,\acute{\tau})$ having order bounded neighborhoods of zero. Then, by applying \cite[Thm.2.19(i)]{AB} and \cite[Thm.2.2]{L}, one can see that $T$ is $p$-bounded iff it is $p_\tau$-bounded. Moreover, $T:(E,\lvert \cdot \rvert,E_\tau)\to(F,\lvert \cdot \rvert,F_{\acute{\tau}})$ is $p_\tau$-bounded iff $T:X\to Y$ is order bounded.

\item[(iii)] Let $X$ be a vector lattice and $(Y,\lVert\cdot\rVert_Y)$ be a normed space. Then $T\in L(X,Y)$ is called \textit{order-to-norm continuous} if $x_\alpha\oc 0$ in $X$ implies $Tx_\alpha\xrightarrow{\lVert\cdot\rVert_Y} 0$; see \cite[Sect.4,p.468]{MMP}. For a locally solid lattice $(X,\tau)$ with the Lebesgue property, the $p_\tau$-continuity of $T:(X,\lvert\cdot\rvert,X_\tau)\to(Y,\lVert\cdot\rVert_Y,\mathbb{R})$ implies order-to-norm continuity of it.

\item[(iii)] Let $X$ be a vector lattice and $(Y,m,F_{\acute{\tau}})$ be an $LSNVL$, and $T:X\to Y$ be a strictly positive operator. Define $p:X\to F_+$, by $p(x)=(m\circ T)(\lvert x\rvert)$. Then $(X,p,F_{\acute{\tau}})$ is an $LSNVL$, and also, the map $T:(X,p,F_{\acute{\tau}})\to (Y,m,F_{\acute{\tau}})$ is $p_\tau$-continuous.
\end{enumerate}
\end{rem}

\begin{prop}\label{dominant opt is bounded}
Any dominated operator $T$ from an $LSNVL$ $(X,p,E_\tau)$ with $(E,\tau)$ has an order bounded $\tau$-neighborhood of zero to an $LSNVL$ $(Y,m,F_{\acute{\tau}})$ is $p_\tau$-bounded.
\end{prop}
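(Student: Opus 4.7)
The plan is to chase the definitions using the domination inequality and then invoke the stated hypothesis on $(E,\tau)$ to pass from $\tau$-boundedness of $p(A)$ to its order boundedness. Let me unpack this.

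First I would fix a $p_\tau$-bounded subset $A \subseteq X$; by definition this means that $p(A)$ is $\tau$-bounded in $E$. The goal is to show that $m(T(A))$ is $\acute{\tau}$-bounded in $F$, which is precisely the condition that $T(A)$ is $p_\tau$-bounded in $Y$.

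Next I would use the hypothesis that $(E,\tau)$ admits an order bounded $\tau$-neighborhood of zero. By \cite[Thm.2.19(i)]{AB}, this is equivalent to the statement that every $\tau$-bounded subset of $E$ is order bounded. Applying this to $p(A)$, there exists $e \in E_+$ such that $p(a) \leq e$ for every $a \in A$ (using positivity of $p$). Now let $S : E \to F$ be a positive operator witnessing the domination of $T$, i.e. $m(Tx) \leq S(p(x))$ for all $x \in X$. For every $a \in A$ we have
\[
m(Ta) \leq S(p(a)) \leq S(e),
\]
so $m(T(A))$ is order bounded in $F$ by the element $S(e) \in F_+$.

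Finally, I would invoke the standard fact that in any locally solid vector lattice every order bounded set is topologically bounded (this is immediate from the existence of a base at zero consisting of solid, hence absorbing on order intervals, neighborhoods). Thus $m(T(A))$ is $\acute{\tau}$-bounded in $F$, which finishes the argument. There is no real obstacle here: the only delicate point is matching the hypothesis on $(E,\tau)$ to the right characterization in \cite[Thm.2.19]{AB} so that $\tau$-boundedness of $p(A)$ upgrades to order boundedness, after which the domination inequality and positivity of $S$ do all the work.
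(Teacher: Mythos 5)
Your argument is correct and follows essentially the same route as the paper: upgrade $\tau$-boundedness of $p(A)$ to order boundedness using the order bounded neighborhood hypothesis, push through the domination inequality with the positive dominant $S$, and conclude via the fact that order bounded sets are topologically bounded in a locally solid lattice. The only quibble is a citation swap: the paper attributes the implication ``$\tau$-bounded $\Rightarrow$ order bounded'' to \cite[Thm.2.2]{L} and reserves \cite[Thm.2.19(i)]{AB} for the converse direction used at the end, whereas you cite the latter for the former step.
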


\begin{proof}
Consider a $p_\tau$-bounded subset $A$ in $X$. That is, $p(A)$ is $\tau$-bounded in $E$. So, $p(A)$ is order bounded in $E$; see \cite[Thm.2.2]{L}. Let $S$ be dominant of $T$. Since $S$ is positive operator, $S\big(p(A)\big)$ is order bounded in $F$. Also, we know that $m\big(T(a)\big)\leq S\big(p(a)\big)$ for all $a\in A$, and so $m\big(T(A)\big)$ is order bounded in $F$. Hence, by applying \cite[Thm.2.19(i)]{AB}, $m\big(T(A)\big)$ is $\acute{\tau}$-bounded in $F$. Therefore, $T$ is $p_\tau$-bounded.
\end{proof}

The converse of Proposition \ref{dominant opt is bounded} is not true in the general. For instance, consider $\ell_\infty$ with the norm topology and $\mathbb{R}$ with the usual topology, and the identity operator $I:(\ell_\infty,\lvert \cdot \rvert,\ell_\infty)\to(\ell_\infty,\lVert\cdot\rVert,\mathbb{R})$. It is $p_\tau$-bounded. Indeed, for any $p_\tau$-bounded set $A$ in $\ell_\infty$, $\lvert A\rvert$ is $\tau$-bounded in $\ell_\infty$. Thus $\big\lVert \lvert A\lvert \big\rVert=\lVert A\rVert$ is bounded in $\mathbb{R}$. But it is not dominated; see \cite[Rem.p.388]{BGKKKM}. Next proposition gives a relation between the $p_\tau$- and order continuity.
\begin{prop}\label{Toc}
Let $(Y,m,F_{\acute{\tau}})$ be arbitrary and $(X,p,E_\tau)$ be $op_\tau$-continuous $LSNVL$s, and $T:(X,p,E_\tau)\to(Y,m,F_{\acute{\tau}})$ be a $($sequentially$)$ $p_\tau$-continuous positive operator. 
Then $T:X\to Y$ is $($$\sigma$-$)$ order continuous operator.
\end{prop}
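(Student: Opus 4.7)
The plan is to reduce to the standard characterization that a positive operator between vector lattices is order continuous if and only if it maps nets decreasing to zero to nets decreasing to zero. So the substantive task is: assuming $x_\alpha\downarrow 0$ in $X$, show $Tx_\alpha\downarrow 0$ in $Y$.

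First I would observe that $x_\alpha\downarrow 0$ yields $x_\alpha\oc 0$ in $X$. Invoking the $op_\tau$-continuity of $X$, this gives $p(x_\alpha)\tc 0$ in $E$, i.e.\ $x_\alpha\ptc 0$. The $p_\tau$-continuity of $T$ then delivers $Tx_\alpha\ptc 0$ in $Y$, which unpacks to $m(Tx_\alpha)\tcc 0$ in $F$.

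Next I would exploit positivity of $T$. Since $T\geq 0$ and $(x_\alpha)$ is decreasing and positive, $(Tx_\alpha)$ is a decreasing net of positive elements in $Y$, so it has an infimum $y\geq 0$ with $Tx_\alpha\downarrow y$. Thus $0\leq y\leq Tx_\alpha$ for every $\alpha$. Because $(Y,m,F_{\acute\tau})$ is an $LSNVL$, the vector norm $m$ is monotone, so $0\leq m(y)\leq m(Tx_\alpha)$ holds coordinatewise. Applying \lemref{monoton is convergent} to the constant net $m(y)$ dominated by $m(Tx_\alpha)\tcc 0$, I conclude $m(y)\tcc 0$; but $m(y)$ is constant, so $m(y)=0$ and hence $y=0$. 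Therefore $Tx_\alpha\downarrow 0$, which (by the standard characterization) shows that $T$ is order continuous.

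For the parenthetical $\sigma$-order continuous case, the same argument works verbatim with sequences in place of nets, using the sequential version of $p_\tau$-continuity of $T$. The main obstacle is crossing back from $p_\tau$-convergence in $Y$ to order convergence in $Y$, a step that is generally not available; it is unlocked here precisely because positivity of $T$ and monotonicity of $(x_\alpha)$ force $(Tx_\alpha)$ itself to be order convergent to some limit $y$, after which monotonicity of $m$ plus \lemref{monoton is convergent} identifies $y=0$.
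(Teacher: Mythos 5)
Your overall route is the same as the paper's: reduce order continuity of the positive operator $T$ to showing $x_\alpha\downarrow 0$ implies $Tx_\alpha\downarrow 0$, pass through $op_\tau$-continuity of $X$ and $p_\tau$-continuity of $T$ to get $m(Tx_\alpha)\tcc 0$, note that $(Tx_\alpha)$ is decreasing by positivity, and then upgrade $p_\tau$-convergence of a decreasing net to order convergence. The paper outsources that last upgrade to a citation (Prop.~2.4 of the reference \cite{AA}), whereas you prove it by hand --- and that is where there is a genuine gap. You write that ``$(Tx_\alpha)$ is a decreasing net of positive elements in $Y$, so it has an infimum $y\geq 0$.'' A decreasing net of positive elements in a general vector lattice need not have an infimum; that requires Dedekind (order) completeness of $Y$, which is not among the hypotheses ($Y$ is an arbitrary $LSNVL$). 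As written, your argument only proves the proposition under an extra completeness assumption on $Y$.

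The good news is that your squeezing idea is exactly the right one and the gap is repairable without the infimum's a priori existence: instead of identifying a pre-existing infimum, show directly that $0$ \emph{is} the infimum of $(Tx_\alpha)$. Clearly $0$ is a lower bound since $x_\alpha\geq 0$ and $T\geq 0$. If $y$ is any lower bound, then $y^+=y\vee 0\leq Tx_\alpha\vee 0=Tx_\alpha$ for every $\alpha$, so monotonicity of $m$ gives $0\leq m(y^+)\leq m(Tx_\alpha)\tcc 0$; by Lemma~\ref{monoton is convergent} the constant net $m(y^+)$ is $\acute{\tau}$-null, hence $m(y^+)=0$ and $y^+=0$, i.e.\ $y\leq 0$. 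Thus $\inf_\alpha Tx_\alpha=0$ and $Tx_\alpha\downarrow 0$. (One small caveat, which the paper's cited lemma must also address: concluding $m(y^+)=0$ from ``the constant net $m(y^+)$ is $\acute{\tau}$-null'' uses that $\acute{\tau}$ is Hausdorff, or at least that the generating Riesz pseudonorms separate points; you should flag this hypothesis explicitly.)
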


\begin{proof}
Assume $x_\alpha\downarrow 0$ in $X$. Since $X$ is $op_\tau$-continuous, we have $p(x_\alpha)\tc 0$, and so $x_\alpha\ptc 0$ in $X$. By the $p_\tau$-continuity of $T$, $m(Tx_\alpha)\tcc 0$ in $F$. It can be seen that $Tx_\alpha\downarrow$ because $T$ is positive. Then, applying \cite[Prop.2.4]{AA}, we get $Tx_\alpha\downarrow 0$. Thus, $T$ is order continuous.
\end{proof}

\begin{cor}
Let $(X,p,E)$ be an $op_\tau$-continuous $LSNVL$ and $(Y,m,F_{\acute{\tau}})$ be an $LSNVL$ with $Y$ being order complete. If $T:(X,p,E)\to(Y,m,F)$ is $p_\tau$-continuous and $T\in L^\sim (X,Y)$ then $T:X\to Y$ is order continuous.
\end{cor}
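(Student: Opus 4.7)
The plan is to reduce the corollary to Proposition~\ref{Toc} via the Riesz--Kantorovich decomposition. Since $T \in L^\sim(X,Y)$ and $Y$ is order complete, there exist positive operators $T^+, T^- \in L^\sim(X,Y)$ with $T = T^+ - T^-$, and the modulus $|T| = T^+ + T^-$ is well-defined. Granted that $T^+$ and $T^-$ are $p_\tau$-continuous, Proposition~\ref{Toc} applied to each positive operator (with domain the $op_\tau$-continuous space $X$) yields their order continuity, and hence $T = T^+ - T^-$ is order continuous as a difference of order continuous operators.

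To produce the $p_\tau$-continuity of $T^\pm$ the natural route is to first prove that $|T|$ is $p_\tau$-continuous; then $0 \le T^\pm \le |T|$ together with Remark~(i) (taking the zero operator and $|T|$ as the positive $p_\tau$-continuous bounds) gives the $p_\tau$-continuity of both $T^+$ and $T^-$. A shortcut is that once $T^+$ is known to be $p_\tau$-continuous, Remark~(i) immediately yields the $p_\tau$-continuity of $T^- = T^+ - T$ as a linear combination of $p_\tau$-continuous operators.

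The principal obstacle is showing that $|T|$ inherits $p_\tau$-continuity from $T$. For $x_\alpha \ptc 0$ in $X$, replacing by $|x_\alpha|$ one may assume $x_\alpha \ge 0$; then the Riesz--Kantorovich formula $|T|x_\alpha = \sup\{|Ty| : |y| \le x_\alpha\}$, combined with $p(y) \le p(x_\alpha) \tc 0$ for admissible $y$ and the $p_\tau$-continuity of $T$ giving $m(Ty) \tcc 0$ individually, must be leveraged to control $m(|T|x_\alpha)$. The technical difficulty is that transferring $\acute{\tau}$-convergence through a supremum is nontrivial in the locally solid setting; a workable approach is to exploit the decreasing-net structure of $|T|x_\alpha$ along $x_\alpha \downarrow 0$ (available by order completeness of $Y$) together with \lemref{monoton is convergent} and to apply \cite[Prop.~2.4]{AA}, mimicking the final step of the proof of Proposition~\ref{Toc}.
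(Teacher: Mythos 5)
Your skeleton is exactly the paper's: decompose $T=T^{+}-T^{-}$ by the Riesz--Kantorovich theorem (using order completeness of $Y$ and $T\in L^\sim(X,Y)$), apply Proposition~\ref{Toc} to each positive part, and conclude. The difference is that you explicitly flag the step that makes this work --- Proposition~\ref{Toc} only applies to \emph{$p_\tau$-continuous} positive operators, so one must first show that $T^{+}$ and $T^{-}$ (equivalently $\lvert T\rvert$) inherit $p_\tau$-continuity from $T$ --- and you do not close that step. Your final paragraph is a plan, not a proof: from $x_\alpha\ptc 0$ and the formula $\lvert T\rvert x_\alpha=\sup\{\lvert Ty\rvert:\lvert y\rvert\leq x_\alpha\}$ you get $m(Ty)\tcc 0$ for each admissible $y$ individually, but the supremum is over a family that grows with $x_\alpha$, and a vector norm $m$ into a locally solid lattice does not commute with (or even control) suprema, so $m(\lvert T\rvert x_\alpha)\tcc 0$ does not follow. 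The fallback you suggest --- use the decreasing structure of $\lvert T\rvert x_\alpha$ along $x_\alpha\downarrow 0$ and invoke \cite[Prop.~2.4]{AA} as in Proposition~\ref{Toc} --- still presupposes $m(\lvert T\rvert x_\alpha)\tcc 0$, which is the very thing that is missing; and reducing instead to order continuity of $\lvert T\rvert$ is circular, since that is equivalent to order continuity of $T$. Also note that Remark~(i) runs in the wrong direction for your "shortcut": it derives $p_\tau$-continuity of an operator squeezed between two $p_\tau$-continuous positive operators, not $p_\tau$-continuity of the parts of a $p_\tau$-continuous operator.

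To be fair, the published proof has the same hole: it applies Proposition~\ref{Toc} to $T^{+}$ and $T^{-}$ without verifying that they are $p_\tau$-continuous, and no statement in the paper supplies that. So you have correctly located the crux of the argument; what is missing, in both your write-up and the paper's, is either a proof that the modulus of a $p_\tau$-continuous order bounded operator is again $p_\tau$-continuous (which would need an additional hypothesis, e.g.\ some compatibility of $m$ with order in $F$), or a direct argument that $x_\alpha\downarrow 0$ forces $Tx_\alpha\oc 0$ without passing through $T^{\pm}$. As it stands, the proposal does not constitute a complete proof.
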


\begin{proof}
Since  $Y$ is order complete and  $T$ is order bounded, by Riesz-Kantorovich formula, we have $T=T^+-T^-$. 
Now, Proposition \ref{Toc} implies that $T^+$ and $T^-$ are both order continuous, and so $T$ is order continuous.
\end{proof}

The following work, which is $p_\tau$-version of \cite[Prop.3]{AEEM2}, gives norm continuity of sequentially $p_\tau$-continuous operator on the mixed-norms.
\begin{prop}\label{seqpcontimpliesnormcont}
Let $(X,p,E_\tau)$ and $(Y,m,F_{\acute{\tau}})$ be two $LSNVL$s with $(E,\lVert\cdot\rVert_E)$ and $(F,\lVert \cdot \rVert_F)$ being normed vector lattices, and where $\tau$ and $\acute{\tau}$ are generated by the norms. If $T:(X,p,E_\tau)\to(Y,m,F_{\acute{\tau}})$ is sequentially $p_\tau$-continuous then $T:(X,p\text{-}\lVert\cdot\rVert_E)$ $\to(Y,m\text{-}\lVert\cdot\rVert_F)$ is norm continuous.
\end{prop}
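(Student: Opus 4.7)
The plan is to recognize that, under the hypothesis that $\tau$ and $\acute{\tau}$ are the norm topologies of $\lVert\cdot\rVert_E$ and $\lVert\cdot\rVert_F$, $p_\tau$-convergence of a sequence collapses to ordinary convergence in the mixed norm; the conclusion then becomes a textbook fact about linear maps between normed spaces.

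First I would note that $\tau$ is generated by the single Riesz pseudonorm $\lVert\cdot\rVert_E$, so a sequence $(e_n)\subset E$ satisfies $e_n\tc 0$ iff $\lVert e_n\rVert_E\to 0$ in $\mathbb{R}$. Specializing to $e_n=p(x_n)$ gives the chain of equivalences
$$x_n\ptc 0 \;\Longleftrightarrow\; \lVert p(x_n)\rVert_E\to 0 \;\Longleftrightarrow\; p\text{-}\lVert x_n\rVert_E\to 0,$$
and the analogous chain holds in $Y$ for $m$ and $\lVert\cdot\rVert_F$.

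Second, I would take any sequence $(x_n)$ in $X$ with $p\text{-}\lVert x_n\rVert_E\to 0$; the first step gives $x_n\ptc 0$, the sequential $p_\tau$-continuity of $T$ yields $Tx_n\ptc 0$ in $Y$, and a second application of the equivalence gives $m\text{-}\lVert Tx_n\rVert_F\to 0$. Hence $T$ is sequentially continuous at $0$ between the normed spaces $(X,p\text{-}\lVert\cdot\rVert_E)$ and $(Y,m\text{-}\lVert\cdot\rVert_F)$.

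Finally, because $T$ is linear, sequential continuity at $0$ is equivalent to norm continuity by a standard argument, which closes the proof. I do not anticipate any substantive obstacle: the only points that deserve a line of verification are that the mixed-norm expressions are genuine norms on $X$ and $Y$ (which uses the vector-norm axioms on $p$ and $m$ together with the monotonicity of the lattice norms $\lVert\cdot\rVert_E,\lVert\cdot\rVert_F$), and that $\tc$-convergence of sequences coincides with norm convergence, which is immediate since $\{\lVert\cdot\rVert_E\}$ itself is a generating family of Riesz pseudonorms for $\tau$.
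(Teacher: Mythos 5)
Your argument is correct and is exactly the intended one: the paper in fact omits the proof of this proposition (deferring to it being the $p_\tau$-version of \cite[Prop.3]{AEEM2}), and your observation that when $\tau$, $\acute{\tau}$ are the norm topologies the convergence $x_n\ptc 0$ is literally equivalent to $p\text{-}\lVert x_n\rVert_E\to 0$ reduces the statement to sequential continuity at zero of a linear map between normed spaces, which is what the cited result does as well. The two verification points you flag (the mixed norm being a genuine norm via monotonicity of the lattice norms, and sequential $\tau$-convergence coinciding with norm convergence) are the only things to check, and you handle them correctly.
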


\begin{rem}\label{ptcontinuous not order bounded}
By applying \cite[Thm.2.19(i)]{AB} and \cite[Prop.4]{AEEM2}, one can see that every $p_\tau$-continuous operator is $p_\tau$-bounded. But, a $p_\tau$-continuous operator $T:(X,p,E_\tau)\to(Y,m,F_{\acute{\tau}})$ need not to be order bounded from $X$ to $Y$. Indeed, consider the classical “Fourier coefficients” operator $T:L_1[0,1]\to c_0$ defined by the formula
$$
T(f)=\bigg ( \int_{0}^{1} f(x)sinx \ dx,\int_{0}^{1} f(x)sin2x\ dx,... \bigg).
$$
Then $T:L_1[0,1]\to c_0$ is norm bounded, but it  is not order bounded; see \cite[Exer.10,p.289]{AB}. So, $T: (L_1[0,1],\lVert \cdot \rVert_{L_1},\mathbb{R})\to (c_0,\lVert \cdot \rVert_{\infty},\mathbb{R})$ is $p_\tau$-continuous and is not order bounded.
\end{rem}

Using \cite[Thm.2.2]{L} in Remark \ref{ptcontinuous not order bounded}, it can be seen that $p_\tau$-continuity implies orded boundedness if $(F,\acute{\tau})$ has order bounded $\acute{\tau}$-neighborhood of zero. Recall that an operator $T\in L(X,Y)$, where $X$ and $Y$ are normed spaces, is called \textit{Dunford-Pettis} if $x_n\wc 0$ in $X$ implies $Tx_n\xrightarrow{\lVert\cdot\rVert} 0$ in $Y$. The following is $p_\tau$-version of \cite[Prop.5]{AEEM2}, so we omit its proof.
\begin{prop}\label{dunford-pettis}
Let $(X,\lVert\cdot\rVert_X)$ be a normed vector lattice and $(Y,\lVert\cdot\rVert_Y)$ be a normed space. 
Put $E:=\mathbb{R}^{X^*}$  and define $p:X\to E_+$, by $p(x)[f]=\lvert f\rvert(\lvert x\rvert)$ for $f\in X^*$. 
It is easy to see that $(X,p,E_\tau)$, where $\tau$ is the topology generated by the norm $\lVert \cdot \rVert_{X^*}$, is an $LSNVL$. Then the followings hold;
\begin{enumerate}
\item[(i)] If $T\in L(X,Y)$ is a Dunford-Pettis operator then $T:(X,p,E_\tau)\to(Y,\lVert\cdot\rVert_Y,\mathbb{R})$ is sequentially $p_\tau$-continuous.
		
\item[(ii)] The converse holds if the lattice operations of $X$ are weakly sequentially continuous.
\end{enumerate}
\end{prop}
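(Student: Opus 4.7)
The plan is to unwind the definition of $p_\tau$-convergence in the specific $LSNVL$ $(X,p,E_\tau)$ of the proposition and translate it into weak convergence in $X$. First I would pin down the topology $\tau$ on $E=\mathbb{R}^{X^*}$: the natural (and only) locally solid topology consistent with the description is the product topology on $\mathbb{R}^{X^*}$, under which a net $(e_\alpha)$ in $E$ is $\tau$-null iff $e_\alpha[f]\to 0$ for every $f\in X^*$. Applying this to $e_\alpha=p(x_n)$ gives the working characterization
$$
x_n\ptc 0 \text{ in } X \;\Longleftrightarrow\; |f|(|x_n|)\to 0 \text{ in } \mathbb{R} \text{ for every } f\in X^*,
$$
while the target convergence $Tx_n\ptc 0$ in $(Y,\|\cdot\|_Y,\mathbb{R})$ is just $\|Tx_n\|_Y\to 0$.

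For (i), I would assume $T$ is Dunford-Pettis and take $x_n\ptc 0$ in $X$. The standard inequality $|f(x_n)|\leq |f|(|x_n|)$ for $f\in X^*$ combined with the above characterization yields $f(x_n)\to 0$ for every $f\in X^*$, i.e.\ $x_n\wc 0$ in $X$. The Dunford-Pettis hypothesis then gives $\|Tx_n\|_Y\to 0$, which is exactly $Tx_n\ptc 0$ in $Y$.

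For (ii), I would assume $T$ is sequentially $p_\tau$-continuous and that the lattice operations of $X$ are weakly sequentially continuous. Given $x_n\wc 0$ in $X$, the weak sequential continuity of $|\cdot|$ gives $|x_n|\wc 0$. Since $X$ is a normed vector lattice, its dual $X^*$ is itself a (Banach) vector lattice, so $|f|\in X^*$ for every $f\in X^*$; evaluating at $|x_n|$ produces $|f|(|x_n|)\to 0$ for each $f\in X^*$, which by the characterization above means $x_n\ptc 0$. By the sequential $p_\tau$-continuity of $T$, $\|Tx_n\|_Y\to 0$, and hence $T$ is Dunford-Pettis.

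The main (and essentially only) obstacle is confirming the correct reading of $\tau$ on $\mathbb{R}^{X^*}$ so that $p_\tau$-convergence becomes the clean ``$|f|(|x_n|)\to 0$ for all $f\in X^*$'' statement; once this is in hand, both directions follow from the standard estimate $|f(x)|\leq|f|(|x|)$, the fact that moduli of continuous linear functionals on a normed vector lattice are again continuous, and the Dunford-Pettis hypothesis itself.
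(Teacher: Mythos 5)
Your proof is correct and follows the intended route: the paper omits the argument, deferring to \cite[Prop.5]{AEEM2}, and your translation of $p_\tau$-convergence into ``$|f|(|x_n|)\to 0$ for all $f\in X^*$'' (reading $\tau$ as the product/pointwise topology on $\mathbb{R}^{X^*}$, which is the only sensible locally solid interpretation of the statement) together with the estimate $|f(x_n)|\le |f|(|x_n|)$ for (i) and the weak sequential continuity of the modulus plus $|f|\in X^*$ for (ii) is exactly that argument. No gaps.
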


%%%%%%%%%%%%%%%%%%%%%%%%%%%%%%%%%%%%%%%%%%%%%%%%%%%%%%%%%%%%%%%%%%

%%%%%%%%%%%%%%%%%%%%%%%%%%%%%%%%%%%%%%%%%%%%%%%%%%%%%%%%%%%%%%%%
\section{$up_\tau$-Continuous Operators}
Recall that a net $(x_\alpha)$ in an $LSNVL$ $(X,p,E_\tau)$ is said to be {\em unbounded $p_\tau$-convergent} to $x$ if $p(\lvert x_\alpha-x\rvert \wedge u)\xrightarrow{\tau}0$ for all $u\in X_+$; see \cite{AA}. 
\begin{defn}
An operator $T$ between two $LSNVL$s $X$ and $Y$ is called {\em $up_\tau$-continuous} if it maps the $up_\tau$-convergent net to  $up_\tau$-convergent nets. If it holds only for sequence then $T$ is called {\em sequentially $up_\tau$-continuous}.
\end{defn}

It is clear that if $T$ is $($sequentially$)$ $p_\tau$-continuous operator then $T$ is $($sequentially$)$ $up_\tau$-continuous. For an $LSNVL$ $(X,p,E_\tau)$, a sublattice $Y$ of $X$ is called {\em $up_\tau$-regular} if, for any net $(y_\alpha)$ in $Y$, the convergence $y_\alpha\uptc 0$ in $Y$ implies $y_\alpha\uptc 0$ in $X$. The following is a more general extension of \cite[Prop.9.4.]{KMT}.

\begin{thm}\label{sequentially $up$-continuous}
Let $(X,p,E_\tau)$ and $(Y,m,F_{\acute{\tau}})$ be $LSNVL$s with $(E,\lVert\cdot\rVert_E)$ being a Banach lattice and $(F,\lVert\cdot\rVert_F)$ being normed vector lattice, and also $\tau$ and $\acute{\tau}$ are being generated by the norms. Then the followings hold;
\begin{enumerate}
\item[(i)] A dominated surjective lattice homomorphism operator $T\in L(X,Y)$ is sequentially $up_\tau$-continuous;

\item[(ii)] If $T\in L(X,Y)$ is a dominated lattice homomorphism operator and $T(X)$ is $up_\tau$-regular in $Y$ then it is sequentially $up_\tau$-continuous;

\item[(iii)] If $T\in L(X,Y)$ is a dominated lattice homomorphism operator and $I_{T(X)}$ $($the ideal generated by $T(X)$$)$ is $up_\tau$-regular in $Y$ then it is sequentially $up_\tau$-continuous.
\end{enumerate}
\end{thm}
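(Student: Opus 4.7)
The plan is to prove part (ii) first, then obtain (i) as an immediate specialization, and adapt the argument for (iii).

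For part (ii), take a sequence $(x_n)$ in $X$ with $x_n \uptc 0$; we want $Tx_n \uptc 0$ in $Y$. Let $S\colon E\to F$ be a dominant of $T$, so $m(Tx)\le S(p(x))$ for every $x\in X$. The crucial observation is that since $E$ is a Banach lattice and $F$ is a normed vector lattice with $\tau$ and $\acute\tau$ generated by the norms, the positive operator $S$ is automatically norm continuous by the classical Kantorovich-type theorem for positive operators on Banach lattices, hence $\tau$-to-$\acute\tau$ continuous. This is the step that forces the Banach lattice hypothesis on $E$, and it is where I expect the main technical point of the argument to sit.

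Next, I would use the lattice-homomorphism structure to identify the correct test elements. For an arbitrary $v\in T(X)_+$, write $v=Tu$ with $u\in X_+$ (using $T(X_+)=T(X)_+$, valid because $T$ is a lattice homomorphism). Then
\[
|Tx_n|\wedge v \;=\; T|x_n|\wedge Tu \;=\; T(|x_n|\wedge u),
\]
so the domination inequality gives $m(|Tx_n|\wedge v)\le S(p(|x_n|\wedge u))$. Since $x_n\uptc 0$, we have $p(|x_n|\wedge u)\tc 0$ in $E$; the continuity of $S$ yields $S(p(|x_n|\wedge u))\tcc 0$ in $F$; and Lemma~\ref{monoton is convergent} then delivers $m(|Tx_n|\wedge v)\tcc 0$ for every $v\in T(X)_+$. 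This is precisely $Tx_n\uptc 0$ in the sublattice $T(X)$, and the hypothesis that $T(X)$ is $up_\tau$-regular in $Y$ upgrades this to $Tx_n\uptc 0$ in $Y$, proving (ii).

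For (i), surjectivity of $T$ means $T(X)=Y$, which is trivially $up_\tau$-regular in itself, so (i) reduces immediately to (ii). For (iii), it remains to pass from test elements in $T(X)_+$ to test elements in $(I_{T(X)})_+$. Since $T(X)$ is already a sublattice (image of a lattice homomorphism), the ideal it generates is
\[
I_{T(X)} \;=\; \{v\in Y : |v|\le w \text{ for some } w\in T(X)_+\}.
\]
Given $v\in (I_{T(X)})_+$, pick such a $w\in T(X)_+$; then $|Tx_n|\wedge v\le |Tx_n|\wedge w$, and applying the already-proved convergence $m(|Tx_n|\wedge w)\tcc 0$ together with Lemma~\ref{monoton is convergent} yields $m(|Tx_n|\wedge v)\tcc 0$. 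Hence $Tx_n\uptc 0$ in $I_{T(X)}$, and the $up_\tau$-regularity of $I_{T(X)}$ in $Y$ finishes the proof.
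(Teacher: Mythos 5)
Your proposal is correct and follows essentially the same route as the paper: the same use of the domination inequality, the identity $|Tx_n|\wedge Tu = T(|x_n|\wedge u)$ from the lattice-homomorphism property, the automatic norm continuity of the positive dominant $S$ from the Banach lattice $E$ into the normed lattice $F$, and the $up_\tau$-regularity hypothesis to pass from $T(X)$ (resp. $I_{T(X)}$) up to $Y$. The only difference is cosmetic: you prove (ii) first and obtain (i) as the special case $T(X)=Y$, whereas the paper proves (i) first and derives (ii) by corestricting $T$ to its image.
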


\begin{proof}
$(i)$ Let's fix a net $x_n\uptc 0$ in $X$ and $u\in Y_+$. Since $T$ is a surjective lattice homomorphism, we have some $v\in X_+$ such that $Tv=u$. So, we have $p(\lvert x_n\rvert\wedge v)\tc 0$ in $E$. 
Since $T$ is dominated, there is a positive operator $S:E\to F$ such that 
$$
m\big(T(\lvert x_n\rvert\wedge v)\big)\leq S\big(p(\lvert x_n\rvert\wedge v)\big).
$$
Taking into account that $T$ is a lattice homomorphism and $Tv=u$, we get $m\big(\lvert Tx_n\rvert\wedge u\big)\leq S\big(p(\lvert x_n\rvert\wedge v)\big)$. By \cite[Thm.4.3]{AB}, we know that every positive operator from a Banach lattice to normed vector lattice is continuous, and so $S$ is continuous. Hence, we get $S\big(p(\lvert x_n\rvert\wedge v)\big)\tcc 0$ in $F$. That is, $m\big(\lvert Tx_n\rvert\wedge u\big) \tcc 0$, and we get the desired result.\\

$(ii)$ Since $T$ is a lattice homomorphism, $T(X)$ is vector sublattice of $Y$. So $\big(T(X),m,F_{\acute{\tau}}\big)$ is an $LSNVL$. Thus, by $(i)$, we have $T:(X,p,E_\tau)\to\big(T(X),m,F_{\acute{\tau}}\big)$ is sequentially $up_\tau$-continuous. 

Next, we show that $T:(X,p,E_\tau)\to(Y,m,F_{\acute{\tau}})$ is sequentially $up_\tau$-continuous. Consider an $up_\tau$-convergent to zero sequence $(x_n)$ in $X$. That is, $Tx_n\uptc 0$ in $T(X)$. Since $T(X)$ is $up_\tau$-regular in $Y$, $T(x_n)\uptc 0$ in $Y$. Therefore, $T$ is sequentially $up_\tau$-continuous.\\

$(iii)$ Let $(x_n)\uptc 0$ sequence in $X$. Thus, $p(\lvert x_n\rvert\wedge u)\tc 0$ in $E$ for all $u\in X_+$. Fix $0\leq w\in I_{T(X)}$. Then there is $x\in X_+$ such that $0\leq w\leq Tx$. For a dominant $S$, we have $	m\big(T(\lvert x_n\rvert\wedge x)\big)\leq S\big(p(\lvert x_n\rvert\wedge x)\big)$ and so, by taking lattice homomorphism of $T$, we have
$$ 
m\big((\lvert Tx_n\rvert\wedge Tx)\big)\leq S\big(p(\lvert x_n\rvert\wedge x)\big).
$$
It follows from $0\leq w\leq Tx$ that $m\big((\lvert Tx_n\rvert\wedge w)\big)\leq S\big(p(\lvert Tx_n\rvert\wedge x)\big)$. Now, the argument given in the proof of $(i)$ can be repeated here as well. Thus, we see that $T:(X,p,E_\tau)\to\big(I_{T(X)},m,F_{\acute{\tau}}\big)$ is sequentially $up_\tau$-continuous. Since $I_{T(X)}$ is $up_\tau$-regular in $Y$, it can be easily seen by $(ii)$ that $T:X\to Y$ is sequentially $up_\tau$-continuous.
\end{proof}

It should be mentioned, by using Theorem \ref{sequentially $up$-continuous}, that an operator, surjective lattice homomorphism with an order continuous dominant, is $up_\tau$-continuous.
\begin{prop}\label{S is uptau contınuous}
Let $(X,p,E_\tau)$ and $(Y,m,F_{\acute{\tau}})$ be two $LSNVL$s with $Y$ being order complete vector lattice. For a positive $up_\tau$-continuous operator $T:(X,p,E_\tau)\to (Y,m,F_{\acute{\tau}})$, consider the operator $S:(X_+,p,E_\tau)\to (Y_+,m,F_{\acute{\tau}})$ defined by $S(x)=\sup\{T(x_\alpha\wedge x):x_\alpha\in X_+,x_\alpha\uptc0\}$ for each $x\in X_+$. Then we have the followings;

\begin{enumerate}
\item[(i)] $S$ is $up_\tau$-continuous operator;
\item[(ii)] The Kantorovich extension of $S$ is $up_\tau$-continuous operator.
\end{enumerate}
\end{prop}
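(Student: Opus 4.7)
The plan is to first establish that $S$ is well-defined and order-bounded above by $T$ on $X_+$, which makes part (i) a direct monotonicity argument, and then to handle the Kantorovich extension in part (ii) by reducing to part (i) via the positive and negative parts.

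For part (i), I note that for each $x\in X_+$ and any $up_\tau$-null net $(x_\alpha)\subset X_+$, one has $0\le x_\alpha\wedge x\le x$, so positivity of $T$ gives $0\le T(x_\alpha\wedge x)\le T(x)$. Thus the entire family in the defining supremum lies in the order interval $[0,T(x)]\subset Y$, and order completeness of $Y$ guarantees that the supremum exists, producing a map $S:X_+\to Y_+$ with $0\le S(x)\le T(x)$ for every $x\in X_+$. For the $up_\tau$-continuity, take $z_\beta\uptc 0$ in $X_+$ and $u\in Y_+$; monotonicity of $\wedge$ and of $m$ yields
\[
m\big(S(z_\beta)\wedge u\big)\le m\big(T(z_\beta)\wedge u\big).
\]
Because $T$ is $up_\tau$-continuous we have $m\big(T(z_\beta)\wedge u\big)\tcc 0$ in $F$, and Lemma~\ref{monoton is convergent} then forces $m\big(S(z_\beta)\wedge u\big)\tcc 0$. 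Since $u\in Y_+$ was arbitrary, $S(z_\beta)\uptc 0$, proving (i).

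For part (ii), once $S$ is shown to be additive and positively homogeneous on $X_+$, the Kantorovich theorem (applicable since $Y$ is order complete) produces a positive linear extension $\widetilde{S}:X\to Y$ given by $\widetilde{S}(x)=S(x^+)-S(x^-)$. Given $x_\beta\uptc 0$ in $X$, the inequalities $0\le x_\beta^\pm\wedge u\le |x_\beta|\wedge u$ combined with Lemma~\ref{monoton is convergent} yield $x_\beta^\pm\uptc 0$ in $X_+$; part (i) then gives $S(x_\beta^\pm)\uptc 0$, and the triangle-type estimate
\[
m\big(|\widetilde{S}(x_\beta)|\wedge u\big)\le m\big(S(x_\beta^+)\wedge u\big)+m\big(S(x_\beta^-)\wedge u\big),
\]
together with one more application of Lemma~\ref{monoton is convergent}, delivers $\widetilde{S}(x_\beta)\uptc 0$.

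The step I expect to be the main obstacle is verifying the additivity of $S$ on $X_+$, which is required for the Kantorovich extension used in (ii): passing the supremum in the definition through the sum $x+y$ is not automatic and typically demands a Riesz-decomposition argument applied to pairs of $up_\tau$-null nets, coupled with a diagonal construction to align their indices. Once additivity is secured, the remaining convergence assertions reduce to the monotonicity estimates and Lemma~\ref{monoton is convergent} as outlined above.
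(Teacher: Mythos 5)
Your convergence arguments are sound and in places supply more detail than the paper does: part (i) (the estimate $0\le S(z_\beta)\le T(z_\beta)$ followed by Lemma~\ref{monoton is convergent}) is not even written out in the paper, which proves only (ii) and silently invokes (i) inside that proof; and your reduction of (ii) to (i) via $x_\beta^{\pm}$ is exactly the paper's closing step. The genuine gap is the one you flagged yourself and then deferred: the additivity of $S$ on $X_+$. That is not a side condition to be postponed --- it is the actual content of the paper's proof of (ii), occupying all of it except the last three lines, and a proof that names it as ``the main obstacle'' without carrying it out has not established (ii).

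The obstacle is also milder than you predict: no Riesz decomposition and no diagonal construction are needed. For subadditivity the paper uses the elementary lattice inequality $(x+y)\wedge z\le x\wedge z+y\wedge z$ for positive elements (\cite[Lem.1.4]{ABPO}): for any $up_\tau$-null net $(x_\alpha)$ in $X_+$ this gives $T\big((x+y)\wedge x_\alpha\big)\le T(x\wedge x_\alpha)+T(y\wedge x_\alpha)\le S(x)+S(y)$, and taking the supremum yields $S(x+y)\le S(x)+S(y)$. For the reverse inequality, take two $up_\tau$-null nets $(x_\alpha)$ and $(y_\beta)$ in $X_+$ and note that the net $(x_\alpha+y_\beta)$, indexed by the product of the two index sets, is again a positive $up_\tau$-null net, hence admissible in the supremum defining $S(x+y)$; the inequality $x\wedge z+y\wedge w\le(x+y)\wedge(z+w)$ for positive elements then gives $T(x\wedge x_\alpha)+T(y\wedge y_\beta)=T(x\wedge x_\alpha+y\wedge y_\beta)\le T\big((x+y)\wedge(x_\alpha+y_\beta)\big)\le S(x+y)$, whence $S(x)+S(y)\le S(x+y)$. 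With additivity in hand, \cite[Thm.1.10]{ABPO} produces the Kantorovich extension $\widehat{S}x=S(x^+)-S(x^-)$ and the remainder of your argument goes through unchanged.
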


\begin{proof}
$(ii)$ We show firstly that $S$ has the Kantorovich extension. To make this let see additivity of it. By using \cite[Lem.1.4]{ABPO}, for any $up_\tau$-null net $(x_\alpha)$ in $X_+$, we have
$$
T\big((x+y)\wedge x_\alpha\big) \leq T(x\wedge x_\alpha)+T(y\wedge x_\alpha)\leq S(x)+S(y).
$$
So, by taking supremum, we get $S(x+y)\leq S(x)+S(y)$. On the other hand, for any two $up_\tau$-null nets $(x_\alpha)$ and $(y_\beta)$ in $X_+$, using the formula in the proof of \cite[Thm.1.28]{ABPO}, we get
$$
T(x\wedge x_\alpha)+T(y\wedge y_\beta)=T(x\wedge x_\alpha+y\wedge y_\beta)\leq T\big((x+y)\wedge (x_\alpha+y_\beta)\big)\leq S(x+y).
$$
So $S(x)+S(y)\leq S(x+y)$. By \cite[Thm.1.10]{ABPO}, $S$ extends to a positive operator, denoted by $\hat{S}:(X,p,E_\tau)\to (Y,m,F_{\acute{\tau}})$. That is $\hat{S}x=S(x^+)-S(x^-)$ for all $x\in X$. Now, we show $up_\tau$-continuity of $\hat{S}$. Fix a net $w_\beta\uptc 0$ in $X$. Then $w^+_\beta\uptc 0$ and $w^-_\beta\uptc 0$ in $X$, and so $S(w^+_\beta)\uptc 0$ and $S(w^-_\beta)\uptc 0$ in $Y$. Hence, $\hat{S}w_\beta=S(w^+_\beta)-S(w^-_\beta)\uptc 0$ in $Y$. 
\end{proof}

We complete this section wit the following technical work.
\begin{prop}\label{sup s is uptau cont}
Consider a positive $up_\tau$-continuous operator $T$  between $LSNVL$s $X$ and $Y$, and an ideal $A$ in $X$. Then an operator $S:(X,p,E_\tau)\to (Y,m,F_{\acute{\tau}})$ defined by $S(x)=\sup\limits_{a\in A}T(\lvert x\rvert\wedge a)$ for each $x\in X$ is $up_\tau$-continuous operator. 
\end{prop}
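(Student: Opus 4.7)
The plan is a simple squeeze argument based on the pointwise domination $0 \le S(x) \le T(|x|)$: once this bound is in place, the desired $up_\tau$-continuity of $S$ follows from that of the map $x \mapsto T(|x|)$ via Lemma \ref{monoton is convergent}.

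First, I would establish $0 \le S(x) \le T(|x|)$ for every $x \in X$. Since $T$ is positive and $|x| \wedge a \le |x|$ for every $a \in A$, each $T(|x| \wedge a) \le T(|x|)$; taking the supremum over $a$ yields the upper bound. For the lower bound, $A$ is an ideal and thus contains $0$, so $T(|x| \wedge 0) = 0$ participates in the supremum, forcing $S(x) \ge 0$.

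Next, fix a net $w_\beta \uptc 0$ in $X$. The definition of $up_\tau$-convergence reads $p(|w_\beta| \wedge u) \tc 0$ for every $u \in X_+$, which is exactly $|w_\beta| \uptc 0$ in $X$. The assumed $up_\tau$-continuity of $T$ therefore gives $T(|w_\beta|) \uptc 0$ in $Y$. Now fix $u \in Y_+$; combining $0 \le S(w_\beta) \le T(|w_\beta|)$ with the monotonicity of $\wedge$ and of the lattice-vector norm $m$ yields
$$
0 \le m\bigl(S(w_\beta) \wedge u\bigr) \le m\bigl(T(|w_\beta|) \wedge u\bigr),
$$
and the right-hand side $\acute{\tau}$-converges to $0$. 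Lemma \ref{monoton is convergent} then delivers $m(S(w_\beta) \wedge u) \tcc 0$, and since $u \in Y_+$ was arbitrary, $S(w_\beta) \uptc 0$.

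I do not anticipate a real obstacle: the ideal hypothesis on $A$ is used only to ensure $0 \in A$, and the rest is a textbook squeeze. The only point to be alert to is that $up_\tau$-convergence is defined through $|x_\alpha|$, so the passage from $w_\beta$ to $|w_\beta|$ in the middle step is essentially tautological and requires no extra work.
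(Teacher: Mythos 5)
Your proof is correct and takes essentially the same route as the paper's: both reduce the claim to the domination $0\leq S(w_\beta)\leq T(\lvert w_\beta\rvert)$, apply the $up_\tau$-continuity of $T$ to $\lvert w_\beta\rvert\uptc 0$, and conclude by the squeeze of Lemma~\ref{monoton is convergent}. The only cosmetic difference is that you justify $S(x)\geq 0$ explicitly via $0\in A$, a point the paper leaves implicit.
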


\begin{proof}
Let $x_\alpha\uptc 0$ be a net in $X$. Then $\lvert x_\alpha\rvert\uptc 0$, and so $T(\lvert x_\alpha\rvert)\uptc 0$ in $Y$. Thus, for each $u\in Y_+$, we have
$$
\lvert S(x_\alpha)\rvert\wedge u=\bigg\lvert\sup\limits_{a\in A}T(\lvert x_\alpha\rvert\wedge a)\bigg\rvert\wedge u\leq \big \lvert T(\lvert x_\alpha\rvert)\big\rvert\wedge u\leq T(\lvert x_\alpha\rvert)\wedge u\uptc 0.
$$
Therefore, $S(x_\alpha)\uptc 0$ in $Y$.
\end{proof}

%%%%%%%%%%%%%%%%%%%%%%%%%%%%%%%%%%%%%%%%%%%%%%%%%%%%%%%%%%%%%%
\section{The Compact-Like Operators}
In this section, we define the notions of $p_\tau$-compact and $up_\tau$-compact operators. 
\begin{defn}
Let $X$ and $Y$ be two $LSNVL$s and $T\in L(X,Y)$. Then $T$ is called {\em $p_\tau$-compact} if, for any $p_\tau$-bounded net $(x_\alpha)$ in $X$, there is a subnet $(x_{\alpha_\beta})$ such that $Tx_{\alpha_\beta}\ptc y$ in $Y$ for some $y\in Y$. If it holds only for sequence then $T$ is called {\em sequentially $p_\tau$-compact}.

\end{defn}

\begin{exam}\
\begin{enumerate}
\item[(i)] Let $(X,\lVert\cdot\rVert_X)$ and $(Y,\lVert\cdot\rVert_Y)$ be normed spaces. Then $T:(X,\lVert\cdot\rVert_X,\mathbb{R})\to(Y,\lVert\cdot\rVert_Y,\mathbb{R})$ is (sequentially) $p_\tau$-compact iff $T:X\to Y$ is compact. 

\item[(ii)] Let $X$ be a vector lattice and $Y$ be a normed space. An operator $T\in L(X,Y)$ is said to be $AM$-compact if $T[-x,x]$ is relatively compact for every $x\in X_+$; see \cite[Def.3.7.1]{M}. Let $(X,\tau)$ be a locally solid vector lattice with order bounded $\tau$-neighborhood and $(Y,\lVert\cdot\rVert_Y)$ be a normed vector lattice. Then $T\in L(X,Y)$ is $AM$-compact operator iff $T:(X,\lvert\cdot\rvert,X_\tau)\to(Y,\lVert\cdot\rVert_Y,\mathbb{R})$ is $p_\tau$-compact; apply \cite[Thm.2.2]{L} and \cite[Thm.2.19(i)]{AB}.
\end{enumerate}
\end{exam}

\begin{lem}\label{sum is also ptcontinuous}
If $S$ and $T$ are (sequentially) $p_\tau$-compact operators between $LSNVL$s then $T+S$ and $\lambda T$, for any real number $\lambda$, are also (sequentially) $p_\tau$-compact operators.
\end{lem}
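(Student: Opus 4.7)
The plan is to handle scalar multiples directly using the homogeneity of the vector norm, and to handle sums by the standard ``subnet of a subnet'' extraction. Throughout, I rely on the fact that a subnet of a $p_\tau$-bounded net is $p_\tau$-bounded (immediate from the definition), and that $p_\tau$-convergence is preserved by linear combinations, which follows because $m$ satisfies $m(\lambda u) = |\lambda|\,m(u)$ and $m(u+v) \le m(u)+m(v)$, together with the linearity of the topology $\acute{\tau}$ (or, for the inequality, \lemref{monoton is convergent}).

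For $\lambda T$: fix a $p_\tau$-bounded net $(x_\alpha)$ in $X$. By $p_\tau$-compactness of $T$, pass to a subnet $(x_{\alpha_\beta})$ with $T x_{\alpha_\beta} \ptc y$ for some $y \in Y$. Then
$$
m\bigl(\lambda T x_{\alpha_\beta} - \lambda y\bigr) = |\lambda|\, m\bigl(T x_{\alpha_\beta} - y\bigr) \tcc 0,
$$
so $\lambda T x_{\alpha_\beta} \ptc \lambda y$, which shows that $\lambda T$ is $p_\tau$-compact.

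For $T+S$: again fix a $p_\tau$-bounded net $(x_\alpha)$ in $X$. Using $p_\tau$-compactness of $T$, extract a subnet $(x_{\alpha_\beta})$ with $T x_{\alpha_\beta} \ptc y$ in $Y$. This subnet is still $p_\tau$-bounded, so applying $p_\tau$-compactness of $S$ to it yields a further subnet $(x_{\alpha_{\beta_\gamma}})$ with $S x_{\alpha_{\beta_\gamma}} \ptc z$ in $Y$. Since $(T x_{\alpha_{\beta_\gamma}})$ is a subnet of the $p_\tau$-convergent net $(T x_{\alpha_\beta})$, it still satisfies $T x_{\alpha_{\beta_\gamma}} \ptc y$. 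The triangle inequality for the vector norm gives
$$
m\bigl((T+S) x_{\alpha_{\beta_\gamma}} - (y+z)\bigr) \le m\bigl(T x_{\alpha_{\beta_\gamma}} - y\bigr) + m\bigl(S x_{\alpha_{\beta_\gamma}} - z\bigr),
$$
and both terms on the right are $\acute{\tau}$-null, so by \lemref{monoton is convergent} the left side is $\acute{\tau}$-null as well. Hence $(T+S) x_{\alpha_{\beta_\gamma}} \ptc y+z$, which is what was required.

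The sequential case is essentially identical, replacing subnets by subsequences: given a $p_\tau$-bounded sequence $(x_n)$, first extract a subsequence along which $T$ has a $p_\tau$-limit, then extract a further subsequence along which $S$ has a $p_\tau$-limit, and conclude as above. There is no real obstacle here; the only point worth noting is that one must extract subnets/subsequences in sequence rather than simultaneously, since the subnet given by $p_\tau$-compactness of $T$ need not be one along which $S$ already converges.
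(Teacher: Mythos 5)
Your proof is correct and is exactly the standard argument the paper has in mind (the paper states this lemma without proof). The successive subnet extraction for $T+S$, the homogeneity of $m$ for $\lambda T$, and the appeal to Lemma~\ref{monoton is convergent} via the triangle inequality are all in order, and the sequential case carries over verbatim with subsequences.
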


\begin{prop}\label{leftandrightmultiplication}
Let $(X,p,E_\tau)$ be an $LSNVL$ and $R,T,S \in L(X)$. 
\begin{enumerate}
\item[(i)] If $T$ is a $($sequentially$)$ $p_\tau$-compact and $S$ is a (sequentially) $p_\tau$-continuous operators then $S\circ T$ is (sequentially) $p_\tau$-compact.
\item[(ii)] If $T$ is a $($sequentially$)$ $p_\tau$-compact and $R$ is a $p_\tau$-bounded operators then $T\circ R$ is (sequentially) $p_\tau$-compact.
\end{enumerate}
\end{prop}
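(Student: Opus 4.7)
The plan is to argue both parts directly from the definitions, letting the compactness hypothesis on $T$ produce a $p_\tau$-convergent subnet and using the additional hypothesis on $S$ or $R$ to transport that convergence through the composition. No new technical machinery is needed beyond linearity and the observation that a $p_\tau$-bounded net is exactly a net whose range is a $p_\tau$-bounded set.

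For part~(i), I would take an arbitrary $p_\tau$-bounded net $(x_\alpha)$ in $X$. By $p_\tau$-compactness of $T$ there exist a subnet $(x_{\alpha_\beta})$ and some $y\in X$ with $Tx_{\alpha_\beta}\ptc y$, that is, $p(Tx_{\alpha_\beta}-y)\tc 0$. Linearity of $S$ then lets me apply $p_\tau$-continuity to the $p_\tau$-null net $Tx_{\alpha_\beta}-y$, which gives $S(Tx_{\alpha_\beta})-Sy=S(Tx_{\alpha_\beta}-y)\ptc 0$, i.e.\ $(S\circ T)x_{\alpha_\beta}\ptc Sy$. This is the required conclusion. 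The sequential statement follows by the same chain of implications with subsequences in place of subnets, since sequential $p_\tau$-compactness of $T$ supplies a subsequence on which sequential $p_\tau$-continuity of $S$ can be applied.

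For part~(ii), let $(x_\alpha)$ again be a $p_\tau$-bounded net in $X$. Since a net is $p_\tau$-bounded precisely when its range set is, the set $\{Rx_\alpha\}=R(\{x_\alpha\})$ is $p_\tau$-bounded by the hypothesis on $R$, so the net $(Rx_\alpha)$ is itself a $p_\tau$-bounded net in $X$. Feeding it into the $p_\tau$-compact operator $T$ yields a subnet $(Rx_{\alpha_\beta})$ and some $y\in X$ with $T(Rx_{\alpha_\beta})=(T\circ R)x_{\alpha_\beta}\ptc y$, which is exactly what is needed. The sequential version is identical, with a subsequence $(Rx_{n_k})$ produced from the $p_\tau$-bounded sequence $(Rx_n)$.

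I do not anticipate any genuine obstacle: both parts are essentially one-line consequences of the definitions. The only small subtleties worth flagging in the write-up are the use of linearity of $S$ in part~(i) to reduce convergence to the nonzero limit $Sy$ to $p_\tau$-continuity applied to a null net, and the observation in part~(ii) that the set-level definition of a $p_\tau$-bounded operator is what allows the hypothesis on $R$ to be invoked for the net $(x_\alpha)$.
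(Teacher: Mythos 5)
Your argument is correct and is exactly the routine verification the paper omits: compactness of $T$ produces the convergent subnet (or subsequence), and linearity plus $p_\tau$-continuity of $S$, respectively the set-level definition of $p_\tau$-boundedness of $R$, transports it through the composition. No gaps.
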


\begin{rem}\
\begin{enumerate}
\item[(i)] Let $X$ be an $LSNVL$ and $(Y,\acute{\tau})$ be a locally solid vector lattice with $Y$ being compact. Then each operator $T:(X,p,E_\tau)\to (Y,\lvert\cdot\rvert,Y_{\acute{\tau}})$ is (sequentially) $p_\tau$-compact.
		
\item[(ii)] Let $X$ be an $LSNVL$ and $(Y,\lVert\cdot\rVert_Y)$ be a finite dimensional normed space, and $\acute{\tau}$ be the topology generated by this norm. If $T:(X,p,E_\tau)\to (Y,\lvert\cdot\rvert,Y_{\acute{\tau}})$ is $p_\tau$-bounded operator then it is sequentially $p_\tau$-compact.

\item[(iii)] Let $(X,\tau)$ be a locally solid vector lattice with an order bounded $\tau$-neighborhood	of zero and $(Y,m,F_{\acute{\tau}})$ be an $op_\tau$-continuous $LSNVL$ with $Y$ being an atomic $KB$-space. If $T:X\to Y$ is order bounded operator then $T:(X,\lvert\cdot\rvert,X_\tau)\to(Y,m,F_{\acute{\tau}})$ is $p_\tau$-compact; see \cite[Thm.2.2]{L} and \cite[Rem.6]{AEEM2}.
\end{enumerate}
\end{rem}

\begin{ques}
Is it true that a $p_\tau$-compact operator is $p_\tau$-bounded?	
\end{ques}

\begin{rem}\label{there is subsequence}
Let $(T_m)$ be a sequence of sequentially $p_\tau$-compact operators from $X$ to $Y$. For a given $p_\tau$-bounded sequence $(x_n)$ in $X$, by a standard diagonal argument, there exists a subsequence $(x_{n_k})$ such that, for any $m\in \mathbb{N}$, $T_mx_{n_k}\ptc y_m$ for some $y_m\in Y$.
\end{rem}

\begin{thm}\label{order conv is compact}
Let $(T_m)$ be a sequence of order bounded sequentially $p_\tau$-compact operators from $(X,p,E_\tau)$ to a sequentially $p_\tau$-complete $op_\tau$-continuous $(Y,q,F_{\acute{\tau}})$ with $Y$ being order complete. If $T_m\oc T$ in $L_b(X,Y)$ then $T$ is sequentially $p_\tau$-compact.
\end{thm}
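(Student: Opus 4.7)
The plan is to mimic the classical argument that the space of compact operators is closed under uniform limits, adapted to order convergence in $L_b(X,Y)$ and the $p_\tau$-topology on $Y$. First, I would fix an arbitrary $p_\tau$-bounded sequence $(x_n)$ in $X$ and apply the diagonal extraction from Remark~\ref{there is subsequence}: since each $T_m$ is sequentially $p_\tau$-compact, there is one common subsequence (still denoted $(x_{n_k})$) such that for every $m\in\mathbb{N}$ the sequence $T_m x_{n_k}$ $p_\tau$-converges in $Y$ to some $y_m\in Y$. Because $Y$ is sequentially $p_\tau$-complete, it then suffices to prove that $(Tx_{n_k})$ is $p_\tau$-Cauchy.

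Next, I would convert the hypothesis $T_m\oc T$ in $L_b(X,Y)$ into usable pointwise information. Since $Y$ is order complete, $L_b(X,Y)$ is itself order complete by Riesz--Kantorovich, so we may dominate the null sequence $(T_m-T)$: there exists a sequence $R_m\downarrow 0$ in $L_b(X,Y)_+$ with $|T_m-T|\leq R_m$ for all $m$. For each $u\in X_+$ this yields $R_m u\downarrow 0$ in $Y$, and the $op_\tau$-continuity of $Y$ gives $q(R_m u)\tcc 0$ in $F$. Together with the monotonicity of the vector norm $q$, this furnishes the pointwise bound $q\bigl((T-T_m)x\bigr)\leq q(R_m|x|)$ for every $x\in X$.

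The main estimate is the three-term split
\[
q(Tx_{n_k}-Tx_{n_l})\leq q(R_m|x_{n_k}|)+q(T_m x_{n_k}-T_m x_{n_l})+q(R_m|x_{n_l}|),
\]
valid for all $m,k,l$. For fixed $m$, the middle term is $\tcc$-small for $k,l$ large, since $(T_m x_{n_k})_k$ is $p_\tau$-Cauchy; the outer terms are $\tcc$-small, pointwise in $k$, once $m$ is large. Combining these in a standard three-$\varepsilon$ way---refining the subsequence once more by a diagonal procedure that preserves $p_\tau$-convergence of each $T_m x_{n_k}$ while making the outer-term bounds controllable uniformly in $k$---shows that $(Tx_{n_k})$ is $p_\tau$-Cauchy, and sequential $p_\tau$-completeness of $Y$ delivers a $p_\tau$-limit.

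The main obstacle is exactly this final uniformization step. In the Banach-space analogue one concludes through the single scalar bound $\|T-T_m\|\cdot\sup_k\|x_{n_k}\|$, but here the order-theoretic domination is only pointwise ($R_m u\downarrow 0$ in $Y$ for each fixed $u$), and $p_\tau$-boundedness of $(x_{n_k})$ is strictly weaker than order boundedness. Bridging this gap cleanly---either by the diagonal refinement sketched above or by exploiting order completeness of $Y$ to replace $R_m|x_{n_k}|$ by an order-bounded surrogate---is where the proof requires the most care.
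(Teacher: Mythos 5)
Your opening moves coincide with the paper's: the diagonal extraction of one subsequence $(x_{n_k})$ along which every $T_mx_{n_k}\ptc y_m$, and the plan to finish via sequential $p_\tau$-completeness of $Y$. But you then diverge by trying to show that $(Tx_{n_k})$ itself is $p_\tau$-Cauchy, and that is exactly where the argument breaks. Your three-term estimate requires $q(R_m\lvert x_{n_k}\rvert)$ to be small \emph{uniformly in $k$} once $m$ is large, and, as you yourself concede, the hypotheses do not deliver this: $p_\tau$-boundedness of $(x_{n_k})$ only says that $p(x_{n_k})$ is $\tau$-bounded in $E$ and gives no order bound on $(x_{n_k})$ in $X$, so the only convergence you actually have, $R_mu\downarrow 0$ for each \emph{fixed} $u\in X_+$ followed by $op_\tau$-continuity, never applies to the whole sequence at once. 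The ``diagonal refinement'' and the ``order-bounded surrogate'' are named but not constructed, and nothing in the data rules out $q(R_m\lvert x_{n_k}\rvert)$ remaining large as $k\to\infty$ for every fixed $m$. As written, the proposal is a correct reduction together with an honest description of the missing step, not a proof.

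The paper routes around this obstacle by introducing the auxiliary sequence of limits $(y_m)$ and proving first that \emph{it} is $p_\tau$-Cauchy, via $q(y_m-y_j)\leq q(y_m-T_mx_{n_k})+q(T_mx_{n_k}-T_jx_{n_k})+q(T_jx_{n_k}-y_j)$. In that estimate the order-convergence hypothesis is applied only to the middle term at a \emph{single fixed} $k$, where $(T_m-T_j)x_{n_k}\oc 0$ in $Y$ and $op_\tau$-continuity of $(Y,q,F_{\acute{\tau}})$ give $q(T_mx_{n_k}-T_jx_{n_k})\tcc 0$; the outer terms are controlled by the already-established convergences $T_mx_{n_k}\ptc y_m$, so no uniformity over $k$ is needed at this stage. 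Sequential $p_\tau$-completeness then produces $y$ with $q(y_m-y)\tcc 0$, and a second three-term estimate is used to conclude $Tx_{n_k}\ptc y$. If you restructure your write-up around $(y_m)$ in this way, the gap you flagged disappears from the Cauchy step (you should still scrutinize the final step, where a term of the form $q(Tx_{n_k}-T_mx_{n_k})$ reappears and requires care of the same kind).
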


\begin{proof}
Let $(x_n)$ be a $p_\tau$-bounded sequence in $X$. By Remark \ref{there is subsequence}, there exists a subsequence $(x_{n_k})$ such that, for any $m\in \mathbb{N}$, $T_mx_{n_k}\ptc y_m$ for some $y_m\in Y$. We show that $(y_m)$ is a $p_\tau$-Cauchy sequence. Consider the following formula
\begin{eqnarray}
q(y_m-y_j)\leq q(y_m-T_mx_{n_k})+q(T_mx_{n_k}-T_jx_{n_k})+q(T_jx_{n_k}-y_j). 
\end{eqnarray}
The first and the third terms in the last inequality both $\acute{\tau}$-converge to zero as $m\to\infty$ and $j\to\infty$, respectively. Since $T_m\oc T$, we have $T_mx_{n_k}\oc Tx_{n_k}$ for all $x_{n_k}$; see \cite[Thm.VIII.2.3]{V}. Then, for a fixed index $k$, we have
$$
\lvert T_mx_{n_k}-T_jx_{n_k}\rvert\leq \lvert T_mx_{n_k}-Tx_{n_k}\rvert+\lvert Tx_{n_k}-T_jx_{n_k}\rvert\oc 0
$$
as $m,j\to\infty$, and so $(T_m-T_j)x_{n_k}\oc 0$ in $Y$. Hence, by $op_\tau$-continuity of $(Y,q,F_{\acute{\tau}})$, we get $q(T_mx_{n_k}-T_jx_{n_k})\tcc 0$ in $F$. By the formula $(1)$, $(y_m)$ is $p_\tau$-Cauchy. Since $Y$ is sequentially $p_\tau$-complete, there is $y\in Y$ such that $q(y_m-y)\tcc 0$ in $F$ as $m\to\infty$. So, for arbitrary $m$, if we take $\acute{\tau}$-limit with $k$ in the following formula
$$
q(Tx_{n_k}-y)\leq q(Tx_{n_k}-T_mx_{n_k})+q(T_mx_{n_k}-y_m)+q(y_m-y),
$$
we get $\acute{\tau}-\lim q(Tx_{n_k}-y)\leq q(Tx_{n_k}-T_mx_{n_k})+q(y_m-y)$ because $q(T_mx_{n_k}-y_m)\tcc 0$. Since $m$ is arbitrary, $\acute{\tau}-\lim q(Tx_{n_k}-y)\tcc 0$. Therefore, $T$ is sequentially $p_\tau$-compact.
\end{proof}

Similar to Theorem \ref{order conv is compact}, we give the following theorem by using equicontinuously and uniformly convergence.
\begin{thm}\label{equicontinuous convergence implies p compactness}
Let $(T_m)$ be a sequence of sequentially $p_\tau$-compact operators from $(X,\lvert\cdot\rvert,X_\tau)$ to a sequentially $p_\tau$-complete $LSNVL$ $(Y,\lvert\cdot\rvert,Y_\tau)$. Then the followings hold;
\begin{enumerate}
\item[(i)] If $(T_m)$ converges equicontinuously to an operator $T:(X,\lvert\cdot\rvert,X_\tau)\to (Y,\lvert\cdot\rvert,Y_{\acute{\tau}})$ then $T$ is sequentially $p_\tau$-compact,

\item[(ii)] If $(T_m)$ uniformly converges on zero
neighborhoods to an operator $T:(X,\lvert\cdot\rvert,X_\tau)\to (Y,\lvert\cdot\rvert,Y_{\acute{\tau}})$ then $T$ is sequentially $p_\tau$-compact.
\end{enumerate}
\end{thm}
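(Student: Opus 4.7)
The plan is to mirror the proof of Theorem \ref{order conv is compact}. Given a $p_\tau$-bounded sequence $(x_n)$ in $X$, first apply Remark \ref{there is subsequence} to extract a subsequence $(x_{n_k})$ together with elements $y_m\in Y$ such that $T_m x_{n_k}\ptc y_m$ for every $m\in\mathbb{N}$. The task then splits into two steps: show that $(y_m)$ is $p_\tau$-Cauchy in $Y$, which, by sequential $p_\tau$-completeness, produces a limit $y\in Y$; and then show $Tx_{n_k}\ptc y$.

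The Cauchy claim is driven by the triangle inequality
$$
|y_m-y_j|\leq |y_m-T_m x_{n_k}|+|(T_m-T)x_{n_k}|+|(T-T_j)x_{n_k}|+|T_j x_{n_k}-y_j|.
$$
For fixed $m,j$, the first and last summands $\tau$-vanish as $k\to\infty$ by construction. The crux is therefore the uniform control of the middle terms $|(T_m-T)x_{n_k}|$ in $k$ for large $m$, which is exactly where the mode of convergence of $(T_m)$ to $T$ enters.

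For part $(i)$, the equicontinuous convergence of $(T_m)$ to $T$ guarantees that for every $\acute{\tau}$-neighborhood $V$ of $0$ in $Y$ there is a $\tau$-neighborhood $U$ of $0$ in $X$ and an index $M$ such that $(T_m-T)(U)\subseteq V$ for all $m\geq M$. Since the $p_\tau$-bounded sequence $(x_{n_k})$ is $\tau$-bounded (because $p=|\cdot|$), absorption gives $\lambda>0$ with $x_{n_k}\in\lambda U$ for every $k$, hence $(T_m-T)x_{n_k}\in\lambda V$ uniformly in $k$ whenever $m\geq M$. Part $(ii)$ is entirely parallel: uniform convergence on a zero neighborhood means $(T_m-T)$ carries a fixed $\tau$-neighborhood $U$ into any prescribed $\acute{\tau}$-neighborhood $V$ for $m$ large, and the same $\tau$-boundedness/absorption trick pushes $(x_{n_k})$ into some $\lambda U$. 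In both cases the two middle terms are $\acute{\tau}$-small uniformly in $k$, so $(y_m)$ is $p_\tau$-Cauchy.

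With $y_m\ptc y$ in hand, the conclusion $Tx_{n_k}\ptc y$ follows from
$$
|Tx_{n_k}-y|\leq |(T-T_m)x_{n_k}|+|T_m x_{n_k}-y_m|+|y_m-y|,
$$
by first picking $m$ large enough to control the first and third summands (via the uniformity just established together with the $p_\tau$-convergence of $(y_m)$) and then sending $k\to\infty$ to kill the middle summand. The main obstacle is pinning down the precise content of \emph{equicontinuous} convergence and \emph{uniform} convergence on zero neighborhoods in this locally solid setting, and verifying that these definitions really do deliver the uniformity in $k$ described above; once that is in place, the rest of the argument is a direct adaptation of Theorem \ref{order conv is compact}.
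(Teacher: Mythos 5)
The paper itself gives no proof of this theorem (it only says the argument is ``similar to Theorem \ref{order conv is compact}''), and your skeleton --- diagonal extraction via Remark \ref{there is subsequence}, a four-term triangle inequality to show $(y_m)$ is $p_\tau$-Cauchy, sequential $p_\tau$-completeness to produce $y$, and a final three-term estimate for $Tx_{n_k}\ptc y$ --- is exactly the intended adaptation. Part (ii) and the concluding limit interchange are fine: since the neighborhood $U$ on which the convergence is uniform is \emph{fixed}, boundedness of $(x_{n_k})$ gives one $\lambda$ with $x_{n_k}\in\lambda U$ for all $k$, and you then apply the definition with $V$ replaced by $\tfrac{1}{\lambda}V$ to make $(T_m-T)x_{n_k}$ genuinely small uniformly in $k$. (Also note that $p_\tau$-boundedness of $(x_{n_k})$ in $(X,\lvert\cdot\rvert,X_\tau)$ really is $\tau$-boundedness, by solidity of the base neighborhoods, so the absorption step is legitimate.)

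The genuine gap is in part (i), and it sits precisely at the point you flagged. With the definition of equicontinuous convergence you wrote down --- for every $\acute{\tau}$-neighborhood $V$ there exist a $\tau$-neighborhood $U$ and an index $M$ with $(T_m-T)(U)\subseteq V$ for $m\geq M$ --- the absorption argument is circular: the scalar $\lambda$ with $x_{n_k}\in\lambda U$ depends on $U$, which depends on $V$, so all you obtain is $(T_m-T)x_{n_k}\in\lambda V$ with $\lambda=\lambda(V)$, and you cannot rescale $V$ without changing $\lambda$. Indeed, under that weak reading the definition is nearly vacuous (on $\mathbb{R}$ the constant sequence $T_m=I$ would ``converge equicontinuously'' to $0$). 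The definition that makes the theorem true, and the one used in the predecessor result \cite[Thm.7]{AEEM2}, carries an extra scaling: for every $V$ there is a $U$ such that \emph{for every} $\varepsilon>0$ there is $M$ with $(T_m-T)(U)\subseteq\varepsilon V$ for all $m\geq M$. With that version the argument closes: fix $V$, get $U$, get $\lambda$ with $x_{n_k}\in\lambda U$ for all $k$, then take $\varepsilon=1/\lambda$ to conclude $(T_m-T)x_{n_k}\in V$ uniformly in $k$ for $m$ large. Once this is inserted, the rest of your proof is a correct transcription of the proof of Theorem \ref{order conv is compact}.
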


\begin{ques}
Is it true that the modulus of (sequentially) $p_\tau$-compact operator is (sequentially) $p_\tau$-compact.
\end{ques}

Let $(X,E)$ be a decomposable $LNS$ and $(Y,F)$ be an $LNS$ with $F$ being order complete. Then each dominated operator $T:X\to Y$ has the exact dominant $\pmb{\lvert} T \pmb{\rvert}:E\to F$; see \cite[4.1.2,p.142]{K}. For a sequence $(T_n)$ in the set of dominated operators $M(X,Y)$, we call $T_n\to T$ in $M(X,Y)$ whenever $\pmb{\lvert} T_n-T\pmb{\rvert}(e)\tcc 0$ in $F$ for each $e\in E$. 
\begin{thm}\label{conv in dominated operator}
Let $(X,p,E_\tau)$ be a decomposable and $(Y,q,F_{\acute{\tau}})$ be a sequentially $p_\tau$-complete $LSNVL$s with $F$ being order complete. If $(T_m)$ is a sequence of sequentially $p_\tau$-compact operators and $T_m\to T$ in $M(X,Y)$ then $T$ is sequentially $p_\tau$-compact. 
\end{thm}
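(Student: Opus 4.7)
The plan is to follow the template of Theorem \ref{order conv is compact}, replacing its order-convergence control by convergence of exact dominants. Let $(x_n)$ be a $p_\tau$-bounded sequence in $X$, so that $p(x_n) \leq e$ for some $e \in E_+$. Since each $T_m$ is sequentially $p_\tau$-compact, a standard diagonal extraction (Remark \ref{there is subsequence}) yields a subsequence $(x_{n_k})$ and elements $y_m \in Y$ with $T_m x_{n_k} \ptc y_m$ for every $m$.

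The heart of the argument is showing that $(y_m)$ is $p_\tau$-Cauchy. For arbitrary $k$,
\[
q(y_m - y_j) \leq q(y_m - T_m x_{n_k}) + q(T_m x_{n_k} - T_j x_{n_k}) + q(T_j x_{n_k} - y_j);
\]
the outer terms $\acute{\tau}$-vanish as $k \to \infty$ for each fixed pair $(m,j)$. For the middle term, decomposability of $X$ together with order completeness of $F$ guarantees that each $T_m - T_j$ admits an exact dominant, and the assignment $T \mapsto \pmb{\lvert} T \pmb{\rvert}$ is monotone and subadditive; combining this with $p(x_{n_k}) \leq e$ gives
\[
q\bigl((T_m - T_j)x_{n_k}\bigr) \leq \pmb{\lvert} T_m - T_j \pmb{\rvert}(e) \leq \pmb{\lvert} T_m - T \pmb{\rvert}(e) + \pmb{\lvert} T - T_j \pmb{\rvert}(e),
\]
and the hypothesis $T_m \to T$ in $M(X,Y)$ drives the right-hand side to $0$ in $\acute{\tau}$ as $m, j \to \infty$. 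Feeding this uniform-in-$k$ bound into the triangle inequality, letting $k \to \infty$ first and then $m, j \to \infty$, Lemma \ref{monoton is convergent} yields $q(y_m - y_j) \tcc 0$. Sequential $p_\tau$-completeness of $Y$ then produces $y \in Y$ with $y_m \ptc y$.

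The identification of $y$ as the $p_\tau$-limit of $T x_{n_k}$ is then a three-term estimate
\[
q(T x_{n_k} - y) \leq q(T x_{n_k} - T_m x_{n_k}) + q(T_m x_{n_k} - y_m) + q(y_m - y),
\]
where the first term is bounded uniformly in $k$ by $\pmb{\lvert} T - T_m \pmb{\rvert}(e)$. Given a solid $\acute{\tau}$-neighborhood $U$ of $0$, choose $m$ so large that both $\pmb{\lvert} T - T_m \pmb{\rvert}(e)$ and $q(y_m - y)$ land in a suitable sub-neighborhood of $U$, and then choose $K$ so that $q(T_m x_{n_k} - y_m)$ does too for $k \geq K$; solidity closes the argument, giving $T x_{n_k} \ptc y$.

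The main obstacle is controlling the middle term of the Cauchy estimate independently of $k$: one must pass from the ``pointwise'' hypothesis $T_m \to T$ in $M(X,Y)$ to a bound on $q\bigl((T_m - T_j)x_{n_k}\bigr)$ that is uniform along the extracted subsequence. This is where decomposability of $X$ and order completeness of $F$ enter crucially, through the existence, monotonicity, and subadditivity of the exact dominant; once these properties of $\pmb{\lvert} \cdot \pmb{\rvert}$ are invoked the remainder is a routine three-neighborhood chase.
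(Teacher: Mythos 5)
Your overall route coincides with the paper's: extract a diagonal subsequence via Remark~\ref{there is subsequence}, show $(y_m)$ is $p_\tau$-Cauchy through a three-term estimate whose middle term is controlled by the subadditivity of the exact dominant, $\pmb{\lvert} T_m-T_j\pmb{\rvert}\leq\pmb{\lvert} T_m-T\pmb{\rvert}+\pmb{\lvert} T-T_j\pmb{\rvert}$, invoke sequential $p_\tau$-completeness of $Y$, and finish with a second three-term estimate and a neighborhood chase. The role you assign to decomposability of $X$ and order completeness of $F$ (existence of exact dominants) is exactly the paper's.

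There is, however, one unjustified step. You pass from ``$(x_n)$ is $p_\tau$-bounded'' to ``$p(x_n)\leq e$ for some $e\in E_+$''. By definition, $p_\tau$-boundedness means only that $p(x_n)$ is $\tau$-bounded in $E$; it is the distinct notion of $p$-boundedness that supplies an order majorant $e$, and the two coincide only under an extra hypothesis on $(E,\tau)$ (an order bounded $\tau$-neighborhood of zero, via \cite[Thm.2.2]{L}) which this theorem does not assume. This matters because the majorant $e$ is used precisely where you locate the difficulty: to make the bound $\pmb{\lvert} T_m-T_j\pmb{\rvert}\big(p(x_{n_k})\big)\leq\pmb{\lvert} T_m-T\pmb{\rvert}(e)+\pmb{\lvert} T-T_j\pmb{\rvert}(e)$ uniform in $k$. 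The paper's own proof does not produce such a majorant: it evaluates the exact dominants at $p(x_{n_k})$ itself and applies the hypothesis $T_m\to T$ in $M(X,Y)$ pointwise at each element $e=p(x_{n_k})$, so the resulting convergence of the middle term holds only for fixed $k$ and is then combined with the vanishing of the outer terms as $k\to\infty$. To repair your argument you must either add the hypothesis under which $p_\tau$-bounded sets are $p$-bounded, or abandon the uniform majorant and argue pointwise in $k$ as the paper does; as written, the production of $e$ is a genuine gap.
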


\begin{proof}
Let $(x_n)$ be a $p_\tau$-bounded sequence in $X$. By Remark \ref{there is subsequence}, there exists a subsequence $(x_{n_k})$ and a sequence $(y_m)$ in $Y$ such that, for any $m\in \mathbb{N}$, $T_mx_{n_k}\ptc y_m$. We show that $(y_m)$ is $p_\tau$-Cauchy sequence in $Y$. Consider the formula $(1)$ of Theorem \ref{order conv is compact}. Similarly, the first and the third terms in the last inequality of $(1)$ both $\acute{\tau}$-converge to zero as $m\to\infty$ and $j\to\infty$, respectively. Since $T_m\in M(X,Y)$ for all $m\in \mathbb{N}$,
$$
q(T_mx_{n_k}-T_jx_{n_k})\leq \pmb{\lvert} T_m-T_j\pmb{\rvert}\big(p(x_{n_k})\big)\leq\pmb{\lvert} T_m-T\pmb{\rvert}\big(p(x_{n_k})\big)+ \pmb{\lvert} T-T_j\pmb{\rvert}\big(p(x_{n_k})\big)\tcc 0
$$
as $m,j\to\infty$. Thus, $q(y_m-y_j)\tcc 0$ in $F$ as $m,j\to\infty$. Therefore, $(y_m)$ is $p_\tau$-Cauchy. Since $Y$ is sequentially $p_\tau$-complete, there is $y\in Y$ such that $q(y_m-y)\tcc 0$ in $F$ as $m\to\infty$. By the following formula
\begin{eqnarray*}
q(Tx_{n_k}-y)&\leq& q(Tx_{n_k}-T_mx_{n_k})+q(T_mx_{n_k}-y_m)+q(y_m-y)\\ &\leq& \pmb{\lvert} T_m-T 
\pmb{\rvert}\big(p(x_{n_k})\big)+q(T_mx_{n_k}-y_m)+q(y_m-y)
\end{eqnarray*}
and by repeating the same of last part of Theorem \ref{order conv is compact}, we get $q(Tx_{n_k}-y)\tcc 0$. Therefore, $T$ is sequentially $p_\tau$-compact.
\end{proof}

\begin{prop}\label{seq.p-compactimpliescompact}
Let $(X,p,E_\tau)$ be an $LSNVL$, where $(E,\lVert\cdot\rVert_E)$ is an $AM$-space with a strong unit, and $(Y,m,F_{\acute{\tau}})$ be an $LSNVL$, where $(F,\lVert\cdot\rVert_F)$ is normed vector lattice and $\acute{\tau}$ is generated by the norm $\lVert\cdot\rVert_F$. If $T:(X,p,E_\tau)\to(Y,m,F_{\acute{\tau}})$ is sequentially $p_\tau$-compact then $T:(X,p\text{-}\lVert\cdot\rVert_E)\to(Y,m\text{-}\lVert\cdot\rVert_F)$ is compact.
\end{prop}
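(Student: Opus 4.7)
The plan is to take an arbitrary mixed-norm bounded sequence $(x_n)$ in $X$ and extract a subsequence $(x_{n_k})$ whose image $(Tx_{n_k})$ converges in $(Y,m\text{-}\lVert\cdot\rVert_F)$; this is the sequential characterization of compactness in the metric setting.

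First I would show that mixed-norm boundedness in $X$ implies $p_\tau$-boundedness. Suppose $\lVert p(x_n)\rVert_E\leq M$ for all $n$. In an Archimedean AM-space with strong unit $e$, the norm is $\lVert u\rVert_E=\inf\{\lambda>0 : \lvert u\rvert\leq\lambda e\}$ with the infimum attained, so $p(x_n)\leq Me$ for every $n$; hence $\{p(x_n)\}$ is order bounded in $E$. Any solid $\tau$-neighborhood $U$ of $0$ absorbs the singleton $\{Me\}$, say $Me\in\lambda U$, and by solidity of $\lambda U$ the inequality $p(x_n)\leq Me$ forces $p(x_n)\in\lambda U$ for all $n$. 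This shows $p(\{x_n\})$ is $\tau$-bounded in $E$, i.e., $\{x_n\}$ is $p_\tau$-bounded in $X$.

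Next, the sequential $p_\tau$-compactness of $T$ yields a subsequence $(x_{n_k})$ and $y\in Y$ with $Tx_{n_k}\ptc y$, which by definition means $m(Tx_{n_k}-y)\tcc 0$ in $F$. Since $\acute{\tau}$ is generated by the norm $\lVert\cdot\rVert_F$, this is exactly $\lVert m(Tx_{n_k}-y)\rVert_F\to 0$, i.e., $m\text{-}\lVert Tx_{n_k}-y\rVert_F\to 0$. Thus $(Tx_{n_k})$ converges in the mixed norm, completing the compactness argument.

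The main obstacle is the first step: converting the purely metric notion of mixed-norm boundedness in $E$ into the locally solid $\tau$-boundedness required by the definition of $p_\tau$-compactness. This is exactly where the hypothesis that $(E,\lVert\cdot\rVert_E)$ is an AM-space with a strong unit is used in an essential way, since it is this structure that forces norm-bounded subsets of $E$ to be order bounded, and order boundedness is a universal sufficient condition for $\tau$-boundedness in any locally solid topology. The remaining steps are essentially bookkeeping, translating back and forth between $p_\tau$-convergence and mixed-norm convergence via the assumption that $\acute{\tau}$ is the norm topology on $F$.
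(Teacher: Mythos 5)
Your proposal is correct and follows essentially the same route as the paper's proof: mixed-norm boundedness plus the strong unit of the $AM$-space gives order boundedness of $(p(x_n))$, hence $\tau$-boundedness (the paper cites \cite[Thm.2.19(i)]{AB} where you give the explicit absorbing-neighborhood argument), and then sequential $p_\tau$-compactness together with the fact that $\acute{\tau}$ is the norm topology on $F$ yields mixed-norm convergence of a subsequence. The only difference is that you spell out the two citations the paper relies on, which is a matter of detail rather than of method.
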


\begin{proof}
Let $(x_n)$ be a normed bounded sequence in $(X,p\text{-}\lVert\cdot\rVert_E)$. That is $p\text{-}\lVert x_n\rVert_E=\lVert p(x_n)\rVert_E<\infty$ for all $n\in N$. Since $(E,\lVert\cdot\rVert_E)$ is an $AM$-space with a strong unit, $p(x_n)$ is order bounded in $E$. Thus, $p(x_n)$ is $\tau$-bounded in $E$; see \cite[Thm.2.19(i)]{AB}. So, $(x_n)$ is a $p_\tau$-bounded sequence in  $(X,p,E_\tau)$. Since $T$ is sequentially $p_\tau$-compact, there are a subsequence $x_{n_{k}}$ and $y\in Y$ such that $m(Tx_{n_{k}}-y)\tcc 0$ in $F$. Then $\lVert m(Tx_{n_{k}}-y)\rVert_F\to 0$ or $m\text{-}\lVert Tx_{n_{k}}-y\rVert_F\to 0$ in $F$. Thus, the operator $T:(X,p\text{-}\lVert\cdot\rVert_E)\to(Y,m\text{-}\lVert\cdot\rVert_F)$ is compact.
\end{proof}

It is known that finite rank operator is compact. Similarly, we see the following work.
\begin{prop}\label{rankoneoperator}
Let $(X,p,E_\tau)$ and $(Y,m,F_{\acute{\tau}})$ be $LSNVL$s with $(F,\acute{\tau})$ having the Lebesgue property. Consider an operator $T:(X,p,E_\tau)\to(Y,m,F_{\acute{\tau}})$ defined by $Tx=f(x)y_0$, where $y_0\in Y$ and $f$ is a linear functional on $X$. If $f:(X,p,E_\tau)\to (\mathbb{R},\lvert \cdot \rvert,\mathbb{R})$ is $p_\tau$-bounded then $T$ is (sequentially) $p_\tau$-compact.
\end{prop}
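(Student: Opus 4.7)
The plan is to take an arbitrary $p_\tau$-bounded net $(x_\alpha)$ in $X$, extract a subnet on which the scalars $f(x_\alpha)$ converge, and then show that the images $Tx_\alpha$ $p_\tau$-converge to the natural candidate limit. First I would observe that since $f$ is $p_\tau$-bounded, the image $\{f(x_\alpha)\}$ is $p_\tau$-bounded in $(\mathbb{R},\lvert\cdot\rvert,\mathbb{R})$, which here just means it is bounded in the usual sense: $|f(x_\alpha)|\leq M$ for some $M>0$ and all $\alpha$.

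Since $[-M,M]$ is compact in $\mathbb{R}$, one can pass to a subnet $(x_{\alpha_\beta})$ with $f(x_{\alpha_\beta})\to\lambda$ for some $\lambda\in\mathbb{R}$. Put $y:=\lambda y_0\in Y$. Using linearity of $f$ and the vector-norm identity $m(c y_0)=|c|\,m(y_0)$, one obtains
\[
m(Tx_{\alpha_\beta}-y)=m\bigl((f(x_{\alpha_\beta})-\lambda)y_0\bigr)=|f(x_{\alpha_\beta})-\lambda|\,m(y_0),
\]
so the remaining task is to prove $c_\beta m(y_0)\tcc 0$ in $F$, where $c_\beta:=|f(x_{\alpha_\beta})-\lambda|$ is a bounded net of nonnegative reals with $c_\beta\to 0$.

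This is where the Lebesgue property of $(F,\acute\tau)$ is invoked. I would set $d_\beta:=\sup_{\gamma\geq\beta}c_\gamma$; this supremum is finite (the $c_\gamma$ are uniformly bounded) and clearly $d_\beta\downarrow 0$ in $\mathbb{R}$. Since $m(y_0)\in F_+$, multiplying yields $d_\beta m(y_0)\downarrow 0$ in $F$, hence $d_\beta m(y_0)\oc 0$. The Lebesgue property then upgrades this to $d_\beta m(y_0)\tcc 0$, and because $c_\beta m(y_0)\leq d_\beta m(y_0)$, Lemma~\ref{monoton is convergent} delivers $c_\beta m(y_0)\tcc 0$, i.e., $Tx_{\alpha_\beta}\ptc y$. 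This establishes $p_\tau$-compactness; for the sequential version one simply replaces the subnet extraction with Bolzano--Weierstrass on the bounded sequence $(f(x_n))$, and the argument is otherwise identical.

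The only genuinely delicate step is the last: converting the scalar convergence $c_\beta\to 0$ in $\mathbb{R}$ into $\acute\tau$-convergence of $c_\beta m(y_0)$ in $F$. The dominating $\downarrow 0$ net $d_\beta m(y_0)$ together with the Lebesgue property handles this cleanly and is precisely what motivates the Lebesgue hypothesis on $(F,\acute\tau)$ in the statement.
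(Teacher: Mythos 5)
Your proof is correct and follows essentially the same route as the paper: extract a subnet on which the scalars $f(x_{\alpha_\beta})$ converge, reduce to $\lvert f(x_{\alpha_\beta})-\lambda\rvert\,m(y_0)$, and pass from order convergence to $\acute{\tau}$-convergence via the Lebesgue property. The only difference is that you explicitly justify the order-convergence step with the dominating decreasing net $d_\beta=\sup_{\gamma\geq\beta}c_\gamma$, which the paper simply asserts.
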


\begin{proof}
Suppose $(x_\alpha)$ is a $p_\tau$-bounded net in $X$. Since $f$ is $p_\tau$-bounded, $f(x_\alpha)$ is bounded in $\mathbb{R}$. Then there is a subnet $(x_{\alpha_{\beta}})$ such that $f(x_{\alpha_{\beta}}) \to \lambda$ for some $\lambda \in \mathbb{R}$. For $y_0\in Y$, we have the following formula
$$
m(Tx_{\alpha_{\beta}}-\lambda y_0)=m(f(x_{\alpha_{\beta}})y_0-\lambda y_0)=m\big((f(x_{\alpha_{\beta}})-\lambda) y_0\big)=\lvert f(x_{\alpha_{\beta}})-\lambda \rvert m(y_0)\oc 0.
$$
By the Lebesgue property of $F$, we get $m(Tx_{\alpha_{\beta}}-\lambda y)\tc 0$ in $E$. Thus, $T$ is $p_\tau$-compact.
\end{proof}

\begin{prop}\label{dominated operatpr is ptau compact}
Let $(X,p,E_\tau)$ be an $LSNVL$ with $(E,\tau)$ having an order bounded $\tau$-neighborhood and $(Y,m,F_{\acute{\tau}})$ be an $LSNVL$, where $(Y,\lVert\cdot\rVert_Y)$ is an order continuous atomic $KB$-space and $\acute{\tau}$ is generated by $\lVert\cdot\rVert_Y$. If $T:(X,p,E_\tau)\to(Y,\lvert\cdot\rvert,Y_{\acute{\tau}})$ is $p$-bounded or dominated operator then it is $p_\tau$-compact.
\end{prop}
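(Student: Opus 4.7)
The plan is to take any $p_\tau$-bounded net $(x_\alpha)$ in $X$ and extract a subnet whose image under $T$ is $p_\tau$-convergent in $Y$. Note that in the target $LSNVL$ we have $F=Y$ and $m=\lvert\cdot\rvert$, while $\acute\tau$ is the norm topology generated by $\lVert\cdot\rVert_Y$; consequently, $p_\tau$-convergence in $(Y,\lvert\cdot\rvert,Y_{\acute\tau})$ coincides with norm convergence in $Y$.

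First I would convert $p_\tau$-boundedness of $(x_\alpha)$ into $p$-boundedness. Since $(E,\tau)$ admits an order bounded $\tau$-neighborhood of zero, \cite[Thm.2.2]{L} implies that every $\tau$-bounded subset of $E$ is order bounded. Applied to $p(\{x_\alpha\})$, this furnishes $e\in E_+$ with $p(x_\alpha)\leq e$ for all $\alpha$, so $\{x_\alpha\}$ is $p$-bounded in $X$.

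Next I would show that $(Tx_\alpha)$ sits inside an order interval of $Y$. If $T$ is dominated with positive dominant $S:E\to Y$, then $\lvert Tx_\alpha\rvert=m(Tx_\alpha)\leq S(p(x_\alpha))\leq S(e)$, so $(Tx_\alpha)\subseteq[-S(e),S(e)]$. If instead $T$ is $p$-bounded, then $T$ sends the $p$-bounded set $\{x_\alpha\}$ to a $p$-bounded subset of $(Y,\lvert\cdot\rvert,Y_{\acute\tau})$; by definition this means $\{\lvert Tx_\alpha\rvert\}$ is order bounded in $Y$, again placing $(Tx_\alpha)$ inside an order interval.

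Finally I would invoke the classical fact that order intervals in an atomic order continuous Banach lattice --- in particular a $KB$-space --- are norm compact; this is the same fact the author cites in the earlier Remark via \cite[Rem.6]{AEEM2}. The order bounded net $(Tx_\alpha)$ therefore has a norm-convergent subnet $Tx_{\alpha_\beta}\to y$, so $\lvert Tx_{\alpha_\beta}-y\rvert\tcc 0$ in $Y$, i.e.\ $Tx_{\alpha_\beta}\ptc y$. The main obstacle is not really conceptual but just the invocation of this norm-compactness of order intervals; everything else is a straightforward unfolding of definitions. If one wished to reprove that compactness from scratch, the standard route is a coordinate-wise extraction against the atoms combined with dominated convergence via the order continuity of $\lVert\cdot\rVert_Y$.
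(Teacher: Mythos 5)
Your proof is correct. The paper in fact states this proposition without giving a proof, but your argument is exactly the one the author signals in the analogous Remark: use \cite[Thm.2.2]{L} to turn $\tau$-boundedness of $p(\{x_\alpha\})$ into order boundedness (hence $p$-boundedness of the net), observe that either hypothesis on $T$ then traps $(Tx_\alpha)$ in an order interval of $Y$, and conclude via the norm compactness of order intervals in an atomic Banach lattice with order continuous norm (the content of \cite[Rem.6]{AEEM2}). Nothing is missing; the only external ingredient is that compactness fact, which you identify and cite correctly.
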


Recall that a linear operator $T$ from an LNS $(X,E)$ to a Banach space $(Y,\lVert\cdot\rVert_Y)$ is called \textit{generalized $AM$-compact} or \textit{$GAM$-compact} if, for any $p$-bounded set $A$ in $X$, $T(A)$ is relatively compact in $(Y,\lVert\cdot\rVert_Y)$. 
\begin{prop}\label{sequentially GAM-compact}
Let $(X,p,E_\tau)$ be an $LSNVL$ with $(E,\tau)$ having an order bounded $\tau$-neighborhood and $(Y,m,F_{\acute{\tau}})$ be an $op_\tau$-continuous $LSNVL$ with a Banach lattice $(Y,\lVert\cdot\rVert_Y)$. If $T:(X,p,E_\tau)\to(Y,\lVert\cdot\rVert_Y)$ is $GAM$-compact then $T:(X,p,E_\tau)\to(Y,m,F_{\acute{\tau}})$ is sequentially $p_\tau$-compact.
\end{prop}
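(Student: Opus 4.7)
The plan is to cascade four reductions, letting the three hypotheses each do one job in turn. Given a $p_\tau$-bounded sequence $(x_n)$ in $X$, the goal is to produce a subsequence whose $T$-image $p_\tau$-converges to some $y\in Y$.

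First, I would upgrade $p_\tau$-boundedness to $p$-boundedness. Because $(E,\tau)$ admits an order bounded $\tau$-neighborhood of zero, \cite[Thm.2.2]{L} forces every $\tau$-bounded subset of $E$ to be order bounded. Applied to $p(\{x_n\})$, this yields an $e\in E_+$ with $p(x_n)\leq e$ for all $n$, so $\{x_n\}$ is a $p$-bounded set in $X$. The $GAM$-compactness of $T:(X,p,E_\tau)\to(Y,\lVert\cdot\rVert_Y)$ then forces $T(\{x_n\})$ to be relatively norm-compact in the Banach lattice $Y$, so one extracts a subsequence $(x_{n_k})$ and finds $y\in Y$ with $\lVert Tx_{n_k}-y\rVert_Y\to 0$.

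Next, I would convert this norm convergence into order convergence by the standard Banach-lattice subsequence trick. Choose indices $k_j$ with $\lVert Tx_{n_{k_j}}-y\rVert_Y\leq 4^{-j}$. Then $\sum_j 2^j\lvert Tx_{n_{k_j}}-y\rvert$ is a norm-absolutely-convergent series in the Banach lattice $Y$, so it defines an element $u\in Y_+$. By construction $\lvert Tx_{n_{k_j}}-y\rvert\leq 2^{-j}u$, which is relative uniform convergence and in particular gives $Tx_{n_{k_j}}\oc y$ in $Y$. Finally, applying $op_\tau$-continuity of $(Y,m,F_{\acute{\tau}})$ to the order-null net $Tx_{n_{k_j}}-y$ yields $m(Tx_{n_{k_j}}-y)\tcc 0$ in $F$, i.e.\ $Tx_{n_{k_j}}\ptc y$ in $Y$. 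This is exactly sequential $p_\tau$-compactness.

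The only delicate ingredient is the passage from norm convergence to order convergence; the dominating element $u$ exists precisely because $Y$ is norm complete, so the Banach lattice hypothesis is exactly what makes the argument go through. The remaining two hypotheses (the order-bounded neighborhood in $E$ and the $op_\tau$-continuity of $Y$) serve only as bookkeeping bridges between the three notions of boundedness/convergence involved, so I do not expect any substantive obstacle beyond assembling these pieces in the correct order.
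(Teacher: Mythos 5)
Your proposal is correct and follows essentially the same route as the paper: pass from $p_\tau$-bounded to $p$-bounded via \cite[Thm.2.2]{L}, apply $GAM$-compactness to get a norm-convergent subsequence, extract a further order-convergent subsequence using the Banach lattice structure, and finish with $op_\tau$-continuity. The only difference is that you spell out the dominating-element construction for the norm-to-order convergence step, whereas the paper simply cites \cite[Thm.VII.2.1]{V} for that fact.
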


\begin{proof}
Let $(x_n)$ be a $p_\tau$-bounded sequence in $X$. By \cite[Thm.2.2]{L}, $(x_n)$ is $p$-bounded in $(X,p,E_\tau)$. Since $T$ is $GAM$-compact, there are a subsequence $(x_{n_k})$ and some $y\in Y$ such that  $\lVert Tx_{n_k}-y\rVert_Y\to 0$. Since $(Y,\lVert\cdot\rVert_Y)$ is Banach lattice then, by \cite[Thm.VII.2.1]{V}, there is a further subsequence $(x_{n_{k_j}})$ such that $Tx_{n_{k_j}}\oc y$ in $Y$. Then, by $op_\tau$-continuity of  $(Y,m,F_{\acute{\tau}})$, we get $Tx_{n_{k_j}}\ptc y$ in $Y$. Hence, $T$ is sequentially $p_\tau$-compact.
\end{proof}

\begin{ques}
Recall that a norm bounded operator between Banach spaces is compact iff its adjoint is likewise compact. Similarly, is it true that adjoint of $p_\tau$-compact operator is $p_\tau$-compact?
\end{ques}

\begin{prop}
Let $(X,\lVert\cdot\rVert_X)$ be a normed lattice and $(Y,\lVert\cdot\rVert_Y)$ be a Banach lattice. If $T:(X,\lVert\cdot\rVert_X,\mathbb{R})\to(Y,\lvert\cdot\rvert,Y_{\acute{\tau}})$ is sequentially $p_\tau$-compact and $p$-bounded, and $f:Y\to\mathbb{R}$ is $\sigma$-order continuous then $(f\circ T):X\to \mathbb{R}$ is compact.
\end{prop}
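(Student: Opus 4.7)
The plan is to show that every norm-bounded sequence $(x_n)$ in $X$ has a subsequence on which $f\circ T$ converges in $\mathbb{R}$, which establishes compactness. First, because $X$ carries the LSNVL structure $(X,\lVert\cdot\rVert_X,\mathbb{R})$ with the usual topology on $\mathbb{R}$, the notions ``norm-bounded'' and ``$p_\tau$-bounded'' coincide on $X$; hence the given $(x_n)$ is $p_\tau$-bounded. By the sequential $p_\tau$-compactness of $T$ there exist a subsequence $(x_{n_k})$ and a $y\in Y$ with $Tx_{n_k}\ptc y$ in $(Y,\lvert\cdot\rvert,Y_{\acute{\tau}})$, that is, $\bigl\lVert\,\lvert Tx_{n_k}-y\rvert\,\bigr\rVert_Y\to 0$; here I read $\acute{\tau}$ as the locally solid norm topology of the Banach lattice $Y$, as in the proof of Proposition~\ref{sequentially GAM-compact}. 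This rewrites as $\lVert Tx_{n_k}-y\rVert_Y\to 0$.

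Next, I would invoke the Vulikh result [Thm.VII.2.1] (already used in this paper): a norm-convergent sequence in a Banach lattice admits a subsequence that is order convergent to the same limit. Applied to $\lVert Tx_{n_k}-y\rVert_Y\to 0$, this furnishes a further subsequence $(x_{n_{k_j}})$ with $Tx_{n_{k_j}}\oc y$ in $Y$. The $\sigma$-order continuity of $f$ now yields $f(Tx_{n_{k_j}})\to f(y)$ in $\mathbb{R}$, so $(f\circ T)(x_{n_{k_j}})$ converges. Since $(x_n)$ was an arbitrary norm-bounded sequence, $f\circ T:X\to\mathbb{R}$ is compact.

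The $p$-boundedness of $T$ plays an auxiliary, comfort role here: it places $\{Tx_n\}$ in some order interval $[-v,v]\subseteq Y$, so both the order-convergent subsequence $(Tx_{n_{k_j}})$ and its limit $y$ live in an order interval, a setting in which the $\sigma$-order continuity of $f$ applies cleanly. The main obstacle is step two, namely translating $p_\tau$-convergence in $(Y,\lvert\cdot\rvert,Y_{\acute{\tau}})$ into genuine norm convergence in $Y$; modulo interpreting $\acute{\tau}$ as the norm topology of the Banach lattice (the natural choice, consistent with the paper's earlier usage in Proposition~\ref{sequentially GAM-compact}), the rest is the smooth chain: norm convergence $\Rightarrow$ order-convergent subsequence $\Rightarrow$ $f$-convergence via $\sigma$-order continuity.
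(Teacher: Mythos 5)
Your proposal is correct and follows essentially the same route as the paper's own proof: pass to a subsequence via sequential $p_\tau$-compactness, identify $p_\tau$-convergence in $(Y,\lvert\cdot\rvert,Y_{\acute{\tau}})$ with norm convergence, extract an order-convergent further subsequence via the Vulikh theorem, and finish with the $\sigma$-order continuity of $f$. Your observation that the $p$-boundedness hypothesis is not really needed matches the paper, which likewise never invokes it in the proof.
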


\begin{proof}
Assume $(x_n)$ be a norm bounded sequence in $X$. Since $T$ is sequentially $p_\tau$-compact, there are a subsequence $(x_{n_k})$ and $y\in Y$ such that $Tx_{n_k}\ptc y$ or $\lvert Tx_{n_k}-y\rvert\tcc 0$ or $Tx_{n_k} \xrightarrow{\lVert\cdot\rVert_Y} y$ in $Y$. Since $(Y,\lVert\cdot\rVert_Y)$ be Banach lattice, there is a further subsequence $(x_{n_{k_j}})$ such that $Tx_{n_{k_j}}\oc y$ in $Y$; see \cite[Thm.VII.2.1]{V}. By $\sigma$-order continuity of $f$, we have $(f\circ T)x_{n_{k_j}}\to f(y)$ in $\mathbb{R}$. 
\end{proof}

We now turn our attention to the $up_\tau$-compact operators.
\begin{defn}
Let $X$ and $Y$ be two $LSNVL$s and $T\in L(X,Y)$. Then $T$ is called {\em $up_\tau$-compact} if, for any $p_\tau$-bounded net $(x_\alpha)$ in $X$, there is a subnet $(x_{\alpha_\beta})$ such that $Tx_{\alpha_\beta}\uptc y$ in $Y$ for some $y\in Y$. If the condition holds only for sequences then $T$ is called {\em sequentially-$up_\tau$-compact}. 
\end{defn}

It is clear that a $p_\tau$-compact operator is $up_\tau$-compact, and similar to Lemma \ref{sum is also ptcontinuous} linear properties hold for $up_\tau$-compact operators. Moreover, an operator $T\in L (X,Y)$ is $($sequentially$)$ $un$-compact iff $T:(X,\lVert\cdot\rVert_X,\mathbb{R})$ $\to(Y,\lVert\cdot\rVert_Y,\mathbb{R})$ is $($sequentially$)$ $up_\tau$-compact; see \cite[Sec.9,p.28]{KMT}. Similar to Proposition \ref{leftandrightmultiplication}, we give the following results. 
\begin{prop}
Let $(X,p,E_\tau)$ be an $LSNVL$ and $R,T,S,H \in L(X)$.
\begin{enumerate}
	\item[(i)] If $T$ is an $($sequentially$)$ $up_\tau$-compact and $S$ is a (sequentially) $p_\tau$-continuous then $S\circ T$ is (sequentially) $up_\tau$-compact.
	
	\item[(ii)] If $T$ is an $($sequentially$)$ $up_\tau$-compact and $R$ is a $p_\tau$-bounded then $T\circ R$ is (sequentially) $up_\tau$-compact.
\end{enumerate}
\end{prop}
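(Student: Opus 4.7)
The plan is to mirror the template of Proposition \ref{leftandrightmultiplication}: unwind the definitions of the three operator classes, extract a subnet using the compactness hypothesis on $T$, and push it through the second operator. Each of the four assertions (net and sequential versions of (i) and (ii)) is handled in the same way, so I will write out the net version; the sequential version is obtained by replacing ``subnet'' with ``subsequence'' throughout, and the hypotheses have been stated in parallel precisely to allow this.

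For (i), I start with an arbitrary $p_\tau$-bounded net $(x_\alpha)$ in $X$. Since $T$ is $up_\tau$-compact, there exist a subnet $(x_{\alpha_\beta})$ and some $y\in X$ such that $Tx_{\alpha_\beta}\uptc y$. The key input already recorded in Section 3 is that every $p_\tau$-continuous operator is automatically $up_\tau$-continuous (the sentence immediately following the definition of $up_\tau$-continuity). Applying this fact to $S$, the $up_\tau$-convergence $Tx_{\alpha_\beta}\uptc y$ is preserved, giving $S(Tx_{\alpha_\beta})\uptc Sy$, i.e.\ $(S\circ T)x_{\alpha_\beta}\uptc Sy$. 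This is exactly the conclusion required of $S\circ T$.

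For (ii), fix again a $p_\tau$-bounded net $(x_\alpha)$ in $X$. Since $R$ is $p_\tau$-bounded, the image $\{Rx_\alpha\}$ is a $p_\tau$-bounded subset of $X$, so $(Rx_\alpha)$ is a $p_\tau$-bounded net. The $up_\tau$-compactness of $T$ then supplies a subnet $(x_{\alpha_\beta})$ of $(x_\alpha)$ and some $y\in X$ with $T(Rx_{\alpha_\beta})\uptc y$, so $(T\circ R)x_{\alpha_\beta}\uptc y$, finishing the argument.

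There is no genuine obstacle here; both parts are essentially bookkeeping once one accepts the implication ``$p_\tau$-continuous $\Rightarrow$ $up_\tau$-continuous'', which the paper has already declared clear. The only point to double-check is that in the sequential case the subnets produced by the compactness hypotheses can be taken to be subsequences, but this is built into the ``sequentially $up_\tau$-compact'' assumption on $T$, so no extra work is needed.
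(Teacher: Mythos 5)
Your proof is correct and follows exactly the argument the paper intends: the paper itself omits the proof, merely saying it is similar to the corresponding result for $p_\tau$-compact operators, and your unwinding of the definitions is that argument. The only substantive ingredient is the implication ``$p_\tau$-continuous $\Rightarrow$ $up_\tau$-continuous,'' which you correctly locate in the remark following the definition of $up_\tau$-continuity; the rest is the routine subnet/subsequence bookkeeping, handled accurately in both parts.
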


Now, we investigate a relation between sequentially $up_\tau$-compact operators and dominated lattice homomorphisms. 
The following is a more general extension of \cite[Prop.9.4]{KMT} and \cite[Thm.8]{AEEM2}, and its proof is similar to Theorem \ref{sequentially $up$-continuous}.

\begin{thm}\label{sequentially $up$-compact}
Let $(X,p,E_\tau)$, $(Y,m,F_{\acute{\tau}})$ and $(Z,q,G_{\hat{\tau}})$ be $LSNVL$s with $(F,\lVert\cdot\rVert_F)$ being Banach lattice and $(G,\lVert\cdot\rVert_G)$ normed lattice, and $\acute{\tau}$ and $\hat{\tau}$ are being generated by the norms. Then the followings hold;
\begin{enumerate}
\item[(i)] If $T\in L(X,Y)$ is a sequentially $up_\tau$-compact operator and $S\in L(Y,Z)$ is a dominated surjective lattice homomorphism then $S\circ T$ is sequentially $up_\tau$-compact;
		
\item[(ii)] If $T\in L(X,Y)$ is a sequentially $up_\tau$-compact, and $S\in L(Y,Z)$ is a dominated lattice homomorphism and $S(Y)$ is $up_\tau$-regular in $Z$ then $S\circ T$ is sequentially $up_\tau$-compact;
		
\item[(iii)] If $T\in L(X,Y)$ is a sequentially $up_\tau$-compact, and $S\in L(Y,Z)$ is a dominated lattice homomorphism operator and $I_{S(Y)}$ $($the ideal generated by $S(Y)$$)$ is $up_\tau$-regular in $Z$ then $S\circ T$ is sequentially $up_\tau$-compact.
\end{enumerate}
\end{thm}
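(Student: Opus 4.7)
The plan is to reduce each of the three statements to the corresponding part of Theorem~\ref{sequentially $up$-continuous}, observing that the hypotheses placed on $S$ here are precisely those that force $S:Y\to Z$ to be sequentially $up_\tau$-continuous. Once that observation is made, the proof is a one-line composition argument, exactly as the author hints.

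For part~(i), I would fix a $p_\tau$-bounded sequence $(x_n)$ in $X$. Sequential $up_\tau$-compactness of $T$ yields a subsequence $(x_{n_k})$ and a vector $y\in Y$ with $Tx_{n_k}\uptc y$ in $Y$. The standing assumptions that $(F,\lVert\cdot\rVert_F)$ is a Banach lattice, $(G,\lVert\cdot\rVert_G)$ a normed vector lattice with $\hat{\tau}$ generated by the norm, and that $S$ is a dominated surjective lattice homomorphism from $Y$ to $Z$, match exactly the hypotheses of Theorem~\ref{sequentially $up$-continuous}(i) with the roles of domain and codomain played by $Y$ and $Z$. Hence $S$ is sequentially $up_\tau$-continuous. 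Applying this to the $up_\tau$-null sequence $(Tx_{n_k}-y)$ and using linearity of $S$ gives $S(Tx_{n_k})\uptc Sy$ in $Z$, i.e.\ $(S\circ T)x_{n_k}\uptc Sy$, so $S\circ T$ is sequentially $up_\tau$-compact.

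Parts~(ii) and~(iii) proceed identically, the only change being the invocation of Theorem~\ref{sequentially $up$-continuous}(ii) (respectively (iii)) to deduce sequential $up_\tau$-continuity of $S$ under the regularity hypothesis on $S(Y)$ (respectively on $I_{S(Y)}$). In each case the Banach-lattice assumption on $F$ and the normed-lattice assumption on $G$ are precisely what is needed to transfer the previously proved continuity statement to the operator $S:Y\to Z$.

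I do not anticipate a genuine obstacle: the content of the theorem is really the composition of sequential $up_\tau$-compactness of $T$ with sequential $up_\tau$-continuity of $S$, and the three forms of the latter were already established in Theorem~\ref{sequentially $up$-continuous}. The only minor verification is that sequential $up_\tau$-continuity of $S$, together with its linearity, upgrades $Tx_{n_k}\uptc y$ in $Y$ to $S(Tx_{n_k})\uptc Sy$ in $Z$, which is immediate from the definition applied to the $up_\tau$-null sequence $Tx_{n_k}-y$.
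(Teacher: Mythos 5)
Your proposal is correct and is essentially the argument the paper intends: the paper omits the proof precisely because, as you observe, the hypotheses on $S$ (together with $F$ being a Banach lattice and $G$ a normed lattice) are exactly those of Theorem~\ref{sequentially $up$-continuous} applied to $S\colon(Y,m,F_{\acute{\tau}})\to(Z,q,G_{\hat{\tau}})$, after which one composes with the sequential $up_\tau$-compactness of $T$ via the null sequence $Tx_{n_k}-y$. Your explicit reduction to Theorem~\ref{sequentially $up$-continuous} as a black box, rather than re-running its dominant/lattice-homomorphism argument, is a clean and faithful rendering of the same idea.
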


\begin{prop}\label{positive and dominated is ptau compact}
Let $(X,p,E_\tau)$ be an $LSNVL$ and $(Y,m,F_{\acute{\tau}})$ be an $up_\tau$-complete $LSNVL$, and $S,\ T:(X,p,E_\tau)\to(Y,m,F_{\acute{\tau}})$ be operators with $0\leq S\leq T$. If $T$ is a lattice homomorphism and (sequentially) $up_\tau$-compact then $S$ is (sequentially) $up_\tau$-compact.
\end{prop}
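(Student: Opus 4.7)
The plan is to show that $S$ sends any $p_\tau$-bounded net, along a suitable subnet, to a $up_\tau$-Cauchy net in $Y$, and then invoke $up_\tau$-completeness of $Y$ to conclude $up_\tau$-convergence. The hypotheses $0\leq S\leq T$ and the fact that $T$ is a lattice homomorphism give the key domination inequality $\lvert Sx\rvert\leq T\lvert x\rvert=\lvert Tx\rvert$, which is the crucial link turning $up_\tau$-control of $T$ into $up_\tau$-control of $S$.

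First I would fix a $p_\tau$-bounded net $(x_\alpha)$ in $X$. Since $T$ is $up_\tau$-compact, we can extract a subnet $(x_{\alpha_\beta})_{\beta\in B}$ and find $y\in Y$ with $Tx_{\alpha_\beta}\uptc y$. I would then verify that the double-indexed net $(Tx_{\alpha_\beta}-Tx_{\alpha_{\beta'}})_{(\beta,\beta')\in B\times B}$ is $up_\tau$-null: for each $u\in Y_+$,
$$m\bigl(\lvert Tx_{\alpha_\beta}-Tx_{\alpha_{\beta'}}\rvert\wedge u\bigr)\leq m\bigl(\lvert Tx_{\alpha_\beta}-y\rvert\wedge u\bigr)+m\bigl(\lvert y-Tx_{\alpha_{\beta'}}\rvert\wedge u\bigr),$$
and both terms on the right $\tcc 0$ by the $up_\tau$-convergence of $(Tx_{\alpha_\beta})$.

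Next I would exploit the three standing hypotheses in the chain
$$\lvert S(x_{\alpha_\beta}-x_{\alpha_{\beta'}})\rvert\leq S\lvert x_{\alpha_\beta}-x_{\alpha_{\beta'}}\rvert\leq T\lvert x_{\alpha_\beta}-x_{\alpha_{\beta'}}\rvert=\lvert T(x_{\alpha_\beta}-x_{\alpha_{\beta'}})\rvert,$$
using in order the positivity of $S$, the inequality $S\leq T$, and the fact that $T$ is a lattice homomorphism. Intersecting with $u\in Y_+$ and applying the monotonicity of the vector norm $m$ gives
$$m\bigl(\lvert S(x_{\alpha_\beta}-x_{\alpha_{\beta'}})\rvert\wedge u\bigr)\leq m\bigl(\lvert T(x_{\alpha_\beta}-x_{\alpha_{\beta'}})\rvert\wedge u\bigr).$$
By Lemma~\ref{monoton is convergent} the right-hand side $\tcc 0$, so the double-indexed net $(Sx_{\alpha_\beta}-Sx_{\alpha_{\beta'}})$ is $up_\tau$-null; that is, $(Sx_{\alpha_\beta})$ is $up_\tau$-Cauchy.

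Finally, since $Y$ is $up_\tau$-complete, there exists $z\in Y$ with $Sx_{\alpha_\beta}\uptc z$, proving $up_\tau$-compactness of $S$. The sequential version follows verbatim by replacing the net $(x_\alpha)$ with a $p_\tau$-bounded sequence and performing the same subsequence extraction. I expect the only delicate bookkeeping is the passage from $up_\tau$-convergence of $(Tx_{\alpha_\beta})$ to the $up_\tau$-Cauchyness of the double net, and the precise invocation of $up_\tau$-completeness (the analogue of $p_\tau$-completeness but with $up_\tau$-Cauchy/$up_\tau$-convergent nets); everything else is a routine application of positivity, lattice-homomorphism identity, and monotonicity of $m$.
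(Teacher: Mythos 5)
Your proposal is correct and follows essentially the same route as the paper: extract a subnet (subsequence) along which $Tx_{\alpha_\beta}\uptc y$, use the chain $\lvert S(x_{\alpha_\beta}-x_{\alpha_{\beta'}})\rvert\leq S\lvert x_{\alpha_\beta}-x_{\alpha_{\beta'}}\rvert\leq T\lvert x_{\alpha_\beta}-x_{\alpha_{\beta'}}\rvert=\lvert T(x_{\alpha_\beta}-x_{\alpha_{\beta'}})\rvert$ wedged with $u$ to transfer $up_\tau$-Cauchyness from $(Tx_{\alpha_\beta})$ to $(Sx_{\alpha_\beta})$, and conclude by $up_\tau$-completeness of $Y$. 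The only cosmetic difference is that you spell out the net case and the triangle-inequality verification that a $up_\tau$-convergent net is $up_\tau$-Cauchy, which the paper states without detail.
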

\begin{proof}
We will prove the sequential case; the other case is similar. Let $(x_n)$ be a $p_\tau$-bounded sequence in $X$. So, there are a subsequence $(x_{n_k})$ and some $y\in Y$ such that $Tx_{n_k}\uptc y$ in $Y$. In particular, it is $up_\tau$-Cauchy. Fix $u\in Y_+$ and note that
$$
\lvert Sx_{n_k}-Sx_{n_j}\rvert\wedge u\leq (S\lvert x_{n_k}-x_{n_j}\rvert)\wedge u\leq(T\lvert x_{n_k}-x_{n_j}\rvert)\wedge u=\lvert Tx_{n_k}-Tx_{n_j}\rvert\wedge u\tcc0
$$
as $k,j\to \infty$. Thus, we get $(Sx_{n_k})$ is a $up_\tau$-Cauchy sequence in $Y$. Therefore, it follows from $up_\tau$-complete of $Y$.
\end{proof} 

\begin{lem}\label{extension is upt continuous}
Let $(X,p,E_\tau)$ and $(Y,m,F_{\acute{\tau}})$ be two $LSNVL$s with $Y$ being order complete vector lattice. If $T:(X,p,E_\tau)\to (Y,m,F_{\acute{\tau}})$ is a positive $up_\tau$-compact operator then the operator $S:(X_+,p,E_\tau)\to (Y_+,m,F_{\acute{\tau}})$ defined by $S(x)=\sup\{T(u\wedge x):u\in X_+\}$ for each $x\in X_+$ is also $up_\tau$-compact operator.
\end{lem}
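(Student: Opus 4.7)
My plan is to first show that the defining supremum collapses, giving $S=T|_{X_+}$, and then read off the compactness assertion directly from the hypothesis on $T$.

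For a fixed $x\in X_+$, the set $\{T(u\wedge x):u\in X_+\}$ is bounded above in $Y_+$ by $T(x)$, because for every $u\in X_+$ one has $u\wedge x\leq x$, and the positivity of $T$ yields $T(u\wedge x)\leq T(x)$. Since $Y$ is order complete, the supremum $S(x)$ therefore exists in $Y_+$. Now taking the particular test element $u=x$ puts $T(x\wedge x)=T(x)$ into the defining set, so $S(x)\geq T(x)$. Combined with the previous upper bound this gives $S(x)=T(x)$ for every $x\in X_+$; in other words, $S$ is simply the restriction $T|_{X_+}$.

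With this identification the $up_\tau$-compactness of $S$ becomes transparent. Given any $p_\tau$-bounded net $(x_\alpha)$ in $X_+$, I would apply the assumed $up_\tau$-compactness of $T$ to extract a subnet $(x_{\alpha_\beta})$ and a vector $y\in Y$ with $Tx_{\alpha_\beta}\uptc y$; the equality $S=T$ on $X_+$ then immediately forces $Sx_{\alpha_\beta}\uptc y$. That $y$ lies in $Y_+$ can be checked through the monotonicity of $m$ together with \lemref{monoton is convergent}, since every $Sx_{\alpha_\beta}=Tx_{\alpha_\beta}$ belongs to $Y_+$ and so $m(y^-\wedge u)$ is dominated by a $\acute{\tau}$-null net for each $u\in Y_+$. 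The only point requiring real attention is the computation of the supremum; the compactness transfer is then automatic, and no further use of order completeness beyond the existence of $S(x)$ is needed.
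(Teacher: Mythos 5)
Your proposal is correct, and it takes a genuinely different route from the paper. You observe that the defining set $\{T(u\wedge x):u\in X_+\}$ contains $T(x)$ (take $u=x$) and is bounded above by $T(x)$ (since $u\wedge x\leq x$ and $T\geq 0$), so the supremum collapses and $S=T|_{X_+}$; the $up_\tau$-compactness of $S$ is then an immediate restatement of the hypothesis on $T$, and order completeness of $Y$ is not even needed. The paper instead runs a domination argument: it extracts a subnet with $Ty_{\beta_\gamma}\uptc y$ and tries to squeeze $\lvert Sy_{\beta_\gamma}-y\rvert\wedge w$ by $\lvert Ty_{\beta_\gamma}-y\rvert\wedge w$ via the implication ``$0\leq a\leq b$ implies $\lvert a-y\rvert\wedge w\leq\lvert b-y\rvert\wedge w$,'' which is not valid in general (consider $a=0$, $b=y\geq 0$), so your identification actually repairs as well as shortens the argument. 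The paper's squeeze-style approach is the one that would be needed if the supremum ranged over a restricted family of $u$'s (as in the neighbouring results where $u$ runs over $up_\tau$-null or $p_\tau$-bounded nets, or over an ideal), whereas your collapse exploits that here $u$ ranges over all of $X_+$. Your additional check that the limit $y$ lies in $Y_+$ — dominating $y^-\wedge u\leq\lvert Tx_{\alpha_\beta}-y\rvert\wedge u$ and invoking \lemref{monoton is convergent} — is sound and is a detail the paper's proof omits.
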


\begin{proof}
Suppose $(y_\beta)$ is a $p_\tau$-bounded net in $X_+$. Then there is a subnet $(y_{\beta_\gamma})$ such that $Ty_{\beta_\gamma}\uptc y$ for some $y\in Y$, and so $m\big(\lvert Ty_{\beta_\gamma}-y\rvert\wedge w\big)\tcc 0$ in $F$ for all $w\in Y_+$. For $u\in X_+$ and fixed $w\in Y_+$, we have
$0\leq T(u\wedge y_{\beta_\gamma})\leq T(y_{\beta_\gamma})$, and so $\lvert T(u\wedge y_{\beta_\gamma})-y\rvert\wedge w\leq\lvert T(y_{\beta_\gamma})-y\rvert\wedge w$. By taking supremum over $u\in X_+$, we get $\lvert Sy_{\beta_\gamma}-y\rvert\wedge w\leq\lvert T(y_{\beta_\gamma})-y\rvert\wedge w\tcc 0$, and so we get the desired result.
\end{proof}

\begin{rem}\label{sum of pbounded is p bounded}
The sum of two $p_\tau$-bounded subsets is also $p_\tau$-bounded since the sum of two solid subsets is solid. Moreover, for a $p_\tau$-bounded net $(x_\alpha)$ in an $LSNVL$ $(X,p,E_\tau)$, the nets $(x^+_\alpha)$ and $(x^-_\alpha)$ are $p_\tau$-bounded. 
\end{rem}
The following theorem is $up_\tau$-compact version of Proposition \ref{S is uptau contınuous}, so we omit its proof.
\begin{thm}\label{S is uptau contınuous}
Let $(X,p,E_\tau)$ and $(Y,m,F_{\acute{\tau}})$ be two $LSNVL$s with $Y$ being order complete vector lattice. If $T:(X,p,E_\tau)\to (Y,m,F_{\acute{\tau}})$ is a positive $up_\tau$-compact operator then the Kantorovich extension of $S:(X_+,p,E_\tau)\to (Y_+,m,F_{\acute{\tau}})$ defined by $S(x)=\sup\{T(x_\alpha\wedge x):x_\alpha\in X_+\ \text{is} \ p_\tau\text{-bounded}\}$ for each $x\in X_+$ is also $up_\tau$-compact.
\end{thm}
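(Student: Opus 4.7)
The plan is to mirror the three-step argument used in the proof of Proposition~\ref{S is uptau contınuous}, replacing \emph{continuity} by \emph{compactness} throughout and invoking Lemma~\ref{extension is upt continuous} at the key juncture. First I would check that $S:X_+\to Y_+$ is well-defined and additive. Well-definedness follows from $T(x_\alpha\wedge x)\leq T(x)$ for every $x_\alpha\in X_+$ together with order completeness of $Y$. For additivity, given $x,y\in X_+$ and any $p_\tau$-bounded $x_\alpha\in X_+$, the inequality $T((x+y)\wedge x_\alpha)\leq T(x\wedge x_\alpha)+T(y\wedge x_\alpha)$ from \cite[Lem.1.4]{ABPO} yields $S(x+y)\leq S(x)+S(y)$ upon passing to the supremum. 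For the reverse direction, I would pick two $p_\tau$-bounded nets $(x_\alpha),(y_\beta)$ in $X_+$; their sum is $p_\tau$-bounded by Remark~\ref{sum of pbounded is p bounded}, and the formula from the proof of \cite[Thm.1.28]{ABPO} gives $T(x\wedge x_\alpha)+T(y\wedge y_\beta)\leq T((x+y)\wedge(x_\alpha+y_\beta))\leq S(x+y)$, hence $S(x)+S(y)\leq S(x+y)$.

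Once additivity is in hand, the Kantorovich extension theorem \cite[Thm.1.10]{ABPO} produces a positive linear operator $\hat{S}:X\to Y$ with $\hat{S}x=S(x^+)-S(x^-)$. To prove $\hat{S}$ is $up_\tau$-compact, I would fix a $p_\tau$-bounded net $(w_\beta)$ in $X$; both $(w^+_\beta)$ and $(w^-_\beta)$ are $p_\tau$-bounded in $X_+$ by Remark~\ref{sum of pbounded is p bounded}. Since any singleton is $p_\tau$-bounded, the present definition of $S$ coincides with the operator treated in Lemma~\ref{extension is upt continuous}, so the restriction $S|_{X_+}$ is $up_\tau$-compact. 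Thus there is a subnet $(w^+_{\beta_\gamma})$ with $S(w^+_{\beta_\gamma})\uptc y^+$ for some $y^+\in Y$, and, applying $up_\tau$-compactness once more, a further subnet along which $S(w^-_{\beta_{\gamma_\delta}})\uptc y^-$. Linearity of $up_\tau$-convergence --- which rests on the Riesz-space inequality $(|a|+|b|)\wedge u\leq|a|\wedge u+|b|\wedge u$ for $u\in Y_+$ --- then yields $\hat{S}w_{\beta_{\gamma_\delta}}=S(w^+_{\beta_{\gamma_\delta}})-S(w^-_{\beta_{\gamma_\delta}})\uptc y^+-y^-$, as required.

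The main obstacle is the third step: one must confirm that Lemma~\ref{extension is upt continuous} delivers $up_\tau$-compactness of $S$ in exactly the form needed here, and then thread two successive subnet extractions so that the positive and negative parts of $(w_\beta)$ converge in the $up_\tau$-sense along a common cofinal subnet. The additivity argument is essentially a transcription of Proposition~\ref{S is uptau contınuous}(ii), and the Kantorovich extension is automatic once additivity has been verified.
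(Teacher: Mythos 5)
Your proposal is correct and is essentially the proof the paper intends: the paper omits the argument precisely because it is the ``$up_\tau$-compact version'' of the earlier proposition on the Kantorovich extension of the analogous $up_\tau$-continuous operator, namely the same two-sided additivity estimate (using that sums of $p_\tau$-bounded sets are $p_\tau$-bounded), the extension $\hat{S}x=S(x^+)-S(x^-)$ via \cite[Thm.1.10]{ABPO}, and the reduction to the positive cone, where Lemma~\ref{extension is upt continuous} supplies $up_\tau$-compactness of $S$ (your observation that singletons are $p_\tau$-bounded, so the two definitions of $S$ coincide, is the right way to make that link). Your handling of the final step --- two successive subnet extractions, noting that the first limit persists along the further subnet, combined with the inequality $(\lvert a\rvert+\lvert b\rvert)\wedge u\leq\lvert a\rvert\wedge u+\lvert b\rvert\wedge u$ --- correctly closes the argument.
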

%%%%%%%%%%%%%%%%%%%%%%%%%%%%%%%%%%%%%%%%%%%%%%%%%%%%%%%%%%%%%%%%%%%%%%%%%%

\end{document}